\numberwithin{equation}{section}
\numberwithin{figure}{section}
\newcommand{\bbC}{\mathbb{C}}
\newcommand{\bbF}{\mathbb{F}}
\newcommand{\bbQ}{\mathbb{Q}}
\newcommand{\bbR}{\mathbb{R}}
\newcommand{\bbZ}{\mathbb{Z}}
\newcommand{\cdotwild}{{}\cdot{}} 
\renewcommand{\Re}{\operatorname{Re}}
\renewcommand{\Im}{\operatorname{Im}}
\declaretheorem[numberwithin=section,name=Theorem]{thm}
\declaretheorem[sibling=thm,name=Lemma]{lem}
\declaretheorem[sibling=thm,name=Corollary]{cor}
\declaretheorem[sibling=thm,name=Conjecture]{conj}
\declaretheorem[sibling=thm,name=Definition]{defn}
\declaretheorem[sibling=thm,name=Remark]{rem}
\begin{document}

\title{Newman's conjecture in various settings}

\author[Julio Andrade]{Julio Andrade}\email{\textcolor{blue}{\href{mailto:julio_andrade@brown.edu}{julio\_andrade@brown.edu}}}

\address{Institute for Computational and Experimental Research in Mathematics (ICERM),
Brown University,
121 South Main Street
Providence, RI 02903}

\author[Alan Chang]{Alan Chang}\email{\textcolor{blue}{\href{mailto:acsix@math.princeton.edu}{acsix@math.princeton.edu}}}

\address{Department of Mathematics, Fine Hall, Washington Road, Princeton, NJ 08544}

\author{Steven J. Miller}\email{\textcolor{blue}{\href{mailto:sjm1@williams.edu}{sjm1@williams.edu}}}
\address{Department of Mathematics and Statistics, Williams College,
Williamstown, MA 01267}

\subjclass[2010]{11M20, 11M26, 11M50, 11Y35, 11Y60, 14G10}

\keywords{Newman's conjecture, zeros of the Riemann zeta function, $L$--functions, function fields, random matrix theory, Sato--Tate conjecture.}

\date{\today}

\thanks{The first and second named authors were funded by NSF Grant DMS0850577 and Williams College, and the third was partially supported by NSF Grant DMS1265673. We thank our colleagues from the 2013 Williams College SMALL REU, especially Minh-Tam Trinh, and David Geraghty and Peter Sarnak for many helpful conversations.}

\begin{abstract} De Bruijn and Newman introduced a deformation of the Riemann zeta function $\zeta(s)$, and found a real constant $\Lambda$ which encodes the movement of the zeros of $\zeta(s)$ under the deformation. The Riemann hypothesis (RH) is equivalent to $\Lambda \le 0$. Newman made the conjecture that $\Lambda \ge 0$ along with the remark that ``the new conjecture is a quantitative version of the dictum that the Riemann hypothesis, if true, is only barely so.''

Newman's conjecture is still unsolved, and previous work could only handle the Riemann zeta function and quadratic Dirichlet $L$-functions, obtaining lower bounds very close to zero (for example, for $\zeta(s)$ the bound is at least $-1.14541 \cdot 10^{-11}$, and for quadratic Dirichlet $L$-functions it is at least $-1.17 \cdot 10^{-7}$). We generalize the techniques to apply to automorphic $L$-functions as well as function field $L$-functions. We further determine the limit of these techniques by studying linear combinations of $L$-functions, proving that these methods are insufficient.

We explicitly determine the Newman constants in various function field settings, which has strong implications for Newman's quantitative version of RH. In particular, let $\mathcal D \in \bbZ[T]$ be a square-free polynomial of degree $3$. Let $D_p$ be the polynomial in $\bbF_p[T]$ obtained by reducing $\mathcal D$ modulo $p$. Then the Newman constant $\Lambda_{D_p}$ equals $\log \frac{|a_p(\mathcal D)|}{2\sqrt{p}}$; by Sato--Tate (if the curve is non-CM) there exists a sequence of primes such that $\lim_{n \to\infty} \Lambda_{D_{p_n}} = 0$. We end by discussing connections with random matrix theory.
\end{abstract}


\maketitle

\setcounter{equation}{0}

\tableofcontents

\section{Introduction}

\subsection{Newman's conjecture for the Riemann zeta function}

Let
\begin{equation}\label{eq:classical-xi}
\xi(s) \ = \  \frac{1}{2} s (s-1) \pi^{-s/2}\Gamma\left(\frac{s}{2}\right) \zeta(s)
\end{equation}
be the completed Riemann zeta function, and let
\begin{equation}
\Xi(x) \ = \  \xi\left(\frac{1}{2} + ix\right).
\end{equation}
Because of the functional equation $\xi(s) = \xi(1-s)$, we know that $x \in \bbR$ implies $\Xi(x) \in \bbR$. In general, we allow $x$ to be complex.

As $\Xi(x)$ decays rapidly as $x \to \infty$ along the real line, we have
\begin{equation}\label{eq:fourier-transform-of-xi}
\Xi(x)
\ = \
\int_{-\infty}^\infty \Phi(u) e^{i u x} \, du
\ = \
\int_{0}^\infty \Phi(u) (e^{i u x} + e^{-iux}) \, du
,
\end{equation}
where $\Phi(u) := \frac{1}{2\pi}\int_{-\infty}^\infty \Xi(x) e^{-iux} \, dx = \frac{1}{2\pi}\int_{0}^\infty \Xi(x) (e^{iix} + e^{-iux}) \, dx$ is the Fourier transform of $\Xi(x)$. In the 1920s, P\'olya introduced a ``time'' parameter $t$ to $\Xi$, given as follows:
\begin{equation}\label{eq:xi-t-x}
\Xi_t(x)
\ :=\
\int_{0}^\infty e^{tu^2} \Phi(u) (e^{i u x} + e^{-iux}) \, du
.
\end{equation}

We refer to the process beginning with \eqref{eq:classical-xi} and ending with \eqref{eq:xi-t-x} as \emph{P\'olya's setup}. For each $t \in \bbR$, \eqref{eq:xi-t-x} gives us a function in $x$ that is both $\bbC \to \bbC$ and $\bbR \to \bbR$. For $t = 0$, we recover our original function $\Xi$. The Riemann Hypothesis (RH) is equivalent to the assertion that $\Xi_0$ has only real zeros. P\'olya hoped to show that the function $\Xi_t$ has only real zeros for all $t \in \bbR$, so RH would follow.

De Bruijn and Newman proved the following results about $\Xi_t(x)$.

\begin{lem}[De Bruijn \cite{debruijn}]\label{lem:real-zeros-stay-real}
If $\Xi_t$ has only real zeros, then so does $\Xi_{t'}$ for all $t' > t$.
\end{lem}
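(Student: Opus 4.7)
The plan is to express $\Xi_{t'}$ as an explicit Gaussian transform of $\Xi_t$ in the imaginary direction, then invoke the classical fact that such a transform preserves the property of having only real zeros.

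First, for $t' = t + s$ with $s > 0$, I would establish the integral identity
\begin{equation*}
\Xi_{t'}(x) \;=\; \frac{1}{\sqrt{4\pi s}} \int_{-\infty}^\infty \Xi_t(x + iy)\, e^{-y^2/(4s)}\, dy.
\end{equation*}
To verify it, substitute \eqref{eq:xi-t-x} for $\Xi_t(x+iy)$, use $e^{\pm iu(x+iy)} = e^{\pm iux}e^{\mp uy}$, interchange the $u$- and $y$-integrals (justified by the rapid decay of $\Phi$ built into P\'olya's setup), and evaluate the resulting Gaussian integrals via $\int_{-\infty}^\infty e^{-y^2/(4s)\pm uy}\,dy = \sqrt{4\pi s}\,e^{su^2}$. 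The factor $e^{su^2}$ combines with $e^{tu^2}$ to yield $e^{(t+s)u^2}$, which reproduces $\Xi_{t+s}(x)$. Expanding $\Xi_t(x+iy) = \sum_n \frac{(iy)^n}{n!}\Xi_t^{(n)}(x)$ and integrating term-by-term shows this is equivalent to the differential-operator formula $\Xi_{t'}(x) = e^{-sD^2}\Xi_t(x)$, where $D = d/dx$.

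Second, I would invoke the classical theorem (P\'olya--Jensen, with refinements by de Bruijn) that for $s > 0$ the operator $e^{-sD^2}$ maps the Laguerre--P\'olya class into itself: if $f$ is a real entire function of genus at most one with only real zeros, then so is $e^{-sD^2}f$. The standard proof approximates $f$ uniformly on compact sets by its truncated Hadamard product $P_N$ (a real polynomial with only real zeros), applies the Hermite--Kakeya--Obreschkoff theorem (or a direct argument using Laguerre's theorem that the map $g \mapsto g - (s/n)g''$ preserves real-rootedness of degree-$n$ polynomials) to see that $e^{-sD^2}P_N$ still has only real zeros, and passes to the limit via Hurwitz's theorem.

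The main obstacle is verifying that $\Xi_t$ truly lies in the Laguerre--P\'olya class. Reality on $\bbR$ and evenness in $x$ are immediate from \eqref{eq:xi-t-x}; the hypothesis of the lemma supplies the reality of all zeros. What remains is the quantitative growth bound $|\Xi_t(x+iy)| \leq \exp\!\bigl(C(1+|x|+|y|)^2\bigr)$ uniform for $t$ in any compact interval, needed to guarantee (i) that $\Xi_t$ has order at most two with the correct genus, so Hadamard factorization applies, and (ii) that the truncations converge uniformly on compacta so Hurwitz's theorem transfers real-rootedness to $\Xi_{t'}$. These estimates follow from the super-exponential decay of $\Phi(u)$ inherent in the Riemann $\xi$-function, together with the Gaussian factor $e^{tu^2}$ being dominated on any compact $t$-interval.
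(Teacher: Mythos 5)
The paper gives no proof here: the lemma is attributed to De~Bruijn \cite{debruijn} and used as a citation, so the comparison is against De~Bruijn's original argument rather than anything in the text. Your proof is correct, and the integral identity checks out (completing the square gives $\int_{-\infty}^\infty e^{-y^2/(4s)\pm uy}\,dy = \sqrt{4\pi s}\,e^{su^2}$, so the Gaussian average of $\Xi_t(x+iy)$ in the imaginary direction does reproduce $\Xi_{t+s}(x)$, and since $e^{-sx^2}\in\mathcal{LP}$ for $s>0$ the Hermite--Poulain/P\'olya machinery applies). But this is a genuinely different route from the one the paper leans on. De~Bruijn's Theorems~11--13 prove the sharper quantitative statement recorded in the paper as \Cref{lem:de-bruijn-analogue}: if the zeros of $\Xi_{t}$ lie in the strip $|\Im z|\le\Delta$, then those of $\Xi_{t'}$ ($t'>t$) lie in $|\Im z|\le\max(\Delta^2-2(t'-t),0)^{1/2}$. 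Your argument yields the qualitative ``real zeros stay real'' but not the rate at which off-line zeros are absorbed; the paper actually uses the quantitative version both in the remark that $\Lambda\le\tfrac12$ for $\zeta$ (from $\Delta=\tfrac12$) and in the function-field analogue, where De~Bruijn's trigonometric-integral argument is adapted to trigonometric polynomials. So your approach is cleaner and suffices for the lemma as stated, but the method it replaces is doing extra work elsewhere in the paper; also note that your Laguerre--P\'olya verification relies on the decay hypothesis $\Phi(u)=O(e^{-|u|^{2+\epsilon}})$, which is exactly condition~(2) the paper isolates in \Cref{sec:conditions-for-generalization}, so you should cite that rather than re-derive it.
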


\begin{lem}[Newman \cite{newman}]\label{lem:polya-false}
There exists some $t \in \bbR$ such that $\Xi_t$ has a non-real zero.
\end{lem}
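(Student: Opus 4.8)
The plan is to show that if $\Xi_t$ had only real zeros for \emph{every} $t \in \bbR$, then in particular this would be true for all sufficiently negative $t$, and a careful analysis of the Fourier integral representation \eqref{eq:xi-t-x} forces a contradiction via a quantitative statement about the spacing of consecutive zeros. The key mechanism is the following: the Gaussian factor $e^{tu^2}$ in \eqref{eq:xi-t-x}, for $t < 0$, suppresses the high-frequency part of $\Phi(u)$ so strongly that $\Xi_t(x)$ behaves, for very negative $t$, like a function whose Fourier transform is essentially concentrated near $u = 0$ — morally like a single Gaussian, or at least like an entire function of exponential type zero with real Taylor coefficients that cannot have infinitely many real zeros with bounded gaps. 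More precisely, I would invoke the theory of the Laguerre--P\'olya class together with the following hard analytic input about $\Phi$: the function $\Phi(u)$ does not decay like a Gaussian, but rather like $\exp(-C e^{4u})$ (a double-exponential), so that $e^{tu^2}\Phi(u)$ is still integrable for every real $t$, yet the resulting family $\Xi_t$ genuinely changes character as $t$ decreases.

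First I would set up the contradiction hypothesis: assume $\Xi_t$ has only real zeros for all $t$. By Lemma~\ref{lem:real-zeros-stay-real} this is equivalent to assuming it for all $t$ below some threshold, hence (since the threshold can be taken arbitrarily negative) for all $t \in \bbR$. Second, I would record that each $\Xi_t$ is an even entire function of order $1$, real on $\bbR$, and that the reality of all its zeros places it in the Laguerre--P\'olya class; in particular I can write down a Hadamard product $\Xi_t(x) = \Xi_t(0) \prod_k (1 - x^2/x_k(t)^2)$ with $x_k(t) \in \bbR$. Third — and this is the analytic heart — I would use the Fourier-analytic bound: for $t \to -\infty$, the measure $e^{tu^2}\Phi(u)\,du$ converges (after rescaling) to a point mass at $u=0$, so $\Xi_t(x)/\Xi_t(0) \to 1$ locally uniformly; but a nonconstant limit that is the locally uniform limit of Laguerre--P\'olya functions must again be in the Laguerre--P\'olya class, and one checks that the only way to reconcile this with the actual rate of decay (coming from the double-exponential decay of $\Phi$) is to produce, at some finite $t$, a pair of zeros that collide on the real axis and then split off into the complex plane as $t$ decreases further. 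The non-real zero appears precisely at such a collision-and-split.

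Concretely, the cleanest route is via the \emph{discriminant} or via a two-zero reduction: truncate $\Phi$ or approximate $\Xi_t$ by a partial product, and track the two zeros nearest the origin, say $\pm x_1(t)$. One shows $x_1(t)$ is real and decreasing as $t$ decreases while the zeros stay real, but the integral identity $\int_0^\infty u^2 e^{tu^2}\Phi(u)\,du = -\tfrac12 \Xi_t''(0)/\text{(normalization)}$, combined with the sign and size of the moments of $\Phi$, forces $\Xi_t''(0)/\Xi_t(0)$ to change sign at some finite negative $t_0$ — but for a Laguerre--P\'olya function with only real zeros one must have $\Xi_t''(0)/\Xi_t(0) \le 0$ always (logarithmic convexity of $\log \Xi_t$ at its even critical point). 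The sign change is therefore incompatible with all zeros being real, giving the desired non-real zero for some $t < t_0$.

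The main obstacle will be making the ``$\Xi_t \to \text{const}$ as $t \to -\infty$, but not fast enough'' heuristic into a rigorous contradiction: one must control the entire family uniformly, which requires the precise asymptotic $\Phi(u) \sim A\, u^{7/2} e^{9u/2}\exp(-\pi e^{4u})$ (from the definition of $\xi$ and classical theta-function estimates) and a careful interchange of limits justified by dominated convergence against the integrable weight $e^{tu^2}$. Subsidiary technical points — entireness and order of $\Xi_t$, uniform convergence of the Hadamard products, and the stability of the Laguerre--P\'olya class under the relevant limits — are standard but must be stated carefully.
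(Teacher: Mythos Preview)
The paper does not supply a proof of this lemma; it is simply quoted from Newman's 1976 paper and used as input. So there is no ``paper's proof'' to compare against, and your task reduces to whether your sketch actually establishes the result.

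It does not, and the failure is concrete. Your proposed contradiction hinges on the claim that $\Xi_t''(0)/\Xi_t(0)$ changes sign at some finite negative $t_0$. But from \eqref{eq:xi-t-x} one has
\[
\Xi_t(0)=2\int_0^\infty e^{tu^2}\Phi(u)\,du,
\qquad
\Xi_t''(0)=-2\int_0^\infty u^2 e^{tu^2}\Phi(u)\,du,
\]
and it is classical (P\'olya; see also the series for $\Phi$ quoted in the paper) that $\Phi(u)>0$ for all real $u$. Hence $\Xi_t(0)>0$ and $\Xi_t''(0)<0$ for \emph{every} $t\in\bbR$, so the ratio is negative for all $t$ and never changes sign. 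The Laguerre inequality $\Xi_t''(0)/\Xi_t(0)\le 0$ that you correctly derive from the Laguerre--P\'olya hypothesis is therefore satisfied identically and yields no contradiction. Your earlier heuristic --- that $\Xi_t(x)/\Xi_t(0)\to 1$ locally uniformly as $t\to-\infty$ --- is fine, but a nonzero constant is trivially in the Laguerre--P\'olya class, so that limit alone is also harmless. (A minor slip: the double-exponential in $\Phi$ is $\exp(-\pi e^{2u})$, not $\exp(-\pi e^{4u})$.)

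Newman's actual argument is different in kind: he proves a general theorem that for any even nonnegative measure $d\mu$ not concentrated at the origin (here $d\mu=\Phi(u)\,du$), the deformed transform $\int e^{tu^2}\cos(xu)\,d\mu(u)$ must have a non-real zero for some $t$. The proof goes through higher Laguerre/Tur\'an-type inequalities and a limiting/rescaling argument that detects the non-degeneracy of $\mu$, not merely the sign of the second moment. If you want to repair your approach, you will need an inequality that genuinely fails for some $t$ --- the second-derivative test at the origin is too weak when $\Phi>0$.
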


In particular, Newman's result shows that what P\'olya had been trying to prove was actually false. However, by combining the two results of De Bruijn and Newman, we see the following.

\begin{cor}
There exists a constant $\Lambda \in \bbR$ (called the De Bruijn--Newman constant) such that

\begin{enumerate}
\item if $t \geq \Lambda$ then $\Xi_t$ has only real zeros, and
\item if $t < \Lambda$ then $\Xi_t$ has a non-real zero.
\end{enumerate}
\end{cor}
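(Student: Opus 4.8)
The plan is to define $\Lambda$ as the infimum of the set $S := \{t \in \bbR : \Xi_t \text{ has only real zeros}\}$, and to show that this set is of the form $[\Lambda, \infty)$ (allowing, a priori, $\Lambda = \pm\infty$, though we will rule these out). First I would observe that, by \Cref{lem:real-zeros-stay-real}, $S$ is an up-set (if $t \in S$ and $t' > t$, then $t' \in S$); hence $S$ is either empty, all of $\bbR$, or an interval unbounded above with some finite left endpoint $\Lambda = \inf S$. The content of \Cref{lem:polya-false} is precisely that $S \neq \bbR$, so $\Lambda > -\infty$. To finish I must (a) rule out $S = \emptyset$ (equivalently, show $\Lambda < +\infty$), and (b) show that the left endpoint $\Lambda$ itself belongs to $S$, so that $S = [\Lambda, \infty)$ rather than $(\Lambda, \infty)$; this last point is what makes conclusion (1) of the corollary read ``$t \geq \Lambda$'' rather than ``$t > \Lambda$.''

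For part (a), I would invoke the classical fact — essentially due to de Bruijn — that for sufficiently large $t$ the function $\Xi_t$ has only real zeros; concretely, the Gaussian factor $e^{tu^2}$ for large $t$ forces $\Phi_t(u) := e^{tu^2}\Phi(u)$ to behave (after suitable analysis of its analytic continuation and decay) like a function whose Fourier transform is in the Laguerre--P\'olya class, so all zeros are real. In fact de Bruijn showed $\Xi_t$ has only real zeros for all $t \geq 1/2$. I would cite this rather than reprove it. For part (b), I need the set of $t$ with only-real zeros to be closed from the left: suppose $t_n \downarrow \Lambda$ with each $t_n \in S$; I want $\Lambda \in S$. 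The key tool is a Hurwitz-type continuity argument — $\Xi_t(x)$ is jointly analytic (indeed entire) in $(t,x)$, and is an even, real entire function of $x$ of order $1$ for each fixed $t$ with controlled growth in $t$ — so a non-real zero $x_0$ of $\Xi_\Lambda$ would, by Hurwitz's theorem, be the limit of zeros $x_n$ of $\Xi_{t_n}$ in a small complex neighborhood of $x_0$; but $\Xi_{t_n}$ has only real zeros, and since $\Xi_t$ has real coefficients its non-real zeros come in conjugate pairs, so such $x_n$ near a genuinely non-real $x_0$ (and its conjugate) cannot exist — unless $x_0$ is real after all. One must take a little care that zeros do not ``escape to infinity'' as $t \to \Lambda$, which is handled by the uniform-in-$t$ growth estimates on $\Xi_t$ coming from \eqref{eq:xi-t-x}.

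Assembling these pieces: $S = [\Lambda, \infty)$ with $\Lambda \in \bbR$, and defining the De Bruijn--Newman constant to be this $\Lambda$, conclusion (1) is the statement $t \geq \Lambda \implies t \in S$, and conclusion (2) is the statement $t < \Lambda = \inf S \implies t \notin S$, i.e., $\Xi_t$ has a non-real zero.

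I expect the main obstacle to be part (b), the closedness of $S$ at its left endpoint: one must genuinely verify that $\Xi_t(x)$ depends on $t$ with enough uniformity (joint analyticity in $(t,x)$ together with locally-uniform-in-$t$ control of growth in $x$) to license the Hurwitz argument and to preclude zeros drifting in from infinity as $t \downarrow \Lambda$. The other two inputs, $\Lambda > -\infty$ and $\Lambda < +\infty$, are handed to us respectively by \Cref{lem:polya-false} and by de Bruijn's large-$t$ result, so they require no new work here beyond citation.
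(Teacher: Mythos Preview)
The paper does not supply a proof of this corollary at all; it simply states it as an immediate consequence of \Cref{lem:real-zeros-stay-real} and \Cref{lem:polya-false}, leaving the details implicit. Your argument is the standard way to fill in those details and is correct. Your part (a) is indeed a result of de Bruijn, and the paper itself later invokes exactly this (in the remark following \Cref{lem:de-bruijn-analogue}, noting $\Lambda \leq \tfrac{1}{2}$). For part (b), your Hurwitz argument is fine, but the worry about zeros escaping to infinity is unnecessary: what you are really showing is that the set of $t$ for which $\Xi_t$ has a non-real zero is \emph{open}, and for that it suffices to observe that a single non-real zero $x_0$ of $\Xi_\Lambda$ persists to nearby $t$ by a purely local application of Hurwitz on a small disk about $x_0$ disjoint from $\bbR$. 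No global zero-counting is required.
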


The Riemann hypothesis is equivalent to $\Lambda \leq 0$. Newman made the following complementary conjecture.

\begin{conj}[Newman's conjecture]
Let $\Lambda$ be the De Bruijn--Newman constant. Then $\Lambda \geq 0$.
\end{conj}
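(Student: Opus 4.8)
The plan is to argue by contradiction: suppose $\Lambda<0$. By \Cref{lem:real-zeros-stay-real} this forces $\Xi_t$ to have only real zeros for every $t$ in the closed interval $[\Lambda,0]$, and moreover these zeros must be simple throughout $(\Lambda,0]$ — a double real zero at some $t_0$ in this range would, by the local normal form of the dynamics below, split into a conjugate pair of non-real zeros for $t$ slightly less than $t_0$, contradicting the minimality of $\Lambda$. Thus a negative $\Lambda$ says that the zeros of $\zeta$ (the zeros of $\Xi_0$) are ``robustly real'': they survive running P\'olya's deformation backward from $t=0$ all the way down to $t=\Lambda<0$. The strategy is to show that this robustness is incompatible with the known erratic vertical spacing of the zeros of $\zeta$, i.e.\ that ``the Riemann hypothesis, if true, is only barely so.''

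To make ``robustly real'' quantitative I would track the zeros $x_1(t)\le x_2(t)\le\cdots$ of $\Xi_t$ as functions of $t$. Since $\widehat{\Xi_t}(u)=e^{tu^2}\Phi(u)$, decreasing $t$ amounts to convolving $\Xi_t$ with a fatter Gaussian, which can pull a pair of nearby real zeros toward one another until they merge and leave the real axis; concretely the zeros obey the regularized ODE $\dot x_j = 2\sum_{k\neq j}(x_j-x_k)^{-1}$ (the sum must be interpreted in a principal-value/renormalized sense, as $\Xi_t$ has infinitely many zeros and the naive sum only converges conditionally). As $t$ increases this is a repulsive flow, so the zeros spread out and stay real; as $t$ decreases it is attractive. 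The next step is to identify a functional of the configuration that is monotone along the flow — a renormalized Dirichlet-type energy built from the gaps $x_{j+1}(t)-x_j(t)$, or the analogue of $\sum_{j\neq k}(x_j-x_k)^{-2}$ — and to show that a finite negative lower bound on $\Lambda$ translates, via this monotonicity, into a \emph{uniform} lower bound on the normalized consecutive gaps of $\zeta$ and, pushing harder, into the statement that those normalized gaps are squeezed toward their mean value $1$ high up the critical line.

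That conclusion contradicts what is known unconditionally about the distribution of the zeros of $\zeta$: there are consecutive zeros much closer together than average (the $\liminf$ of the normalized gap is provably $<1$, and conjecturally $0$ — these are the Lehmer pairs) as well as pairs much farther apart than average. Hence $\Lambda<0$ is impossible. Equivalently — and this is exactly how the existing numerical bounds on $\Lambda$ near $0$ are obtained — a single sufficiently strong Lehmer pair, a pair of zeros of $\zeta$ at height $T$ separated by $o(1/\log T)$ with an explicit rate, already forces $\Lambda\ge 0$; so a more direct route would be to prove that $\zeta$ possesses Lehmer pairs of arbitrarily high quality. The main obstacle — and the reason the conjecture has resisted since Newman stated it — is precisely this final link: converting the soft information ``$\Lambda$ finite and negative'' into a rigidity statement sharp enough to contradict what can currently be \emph{proved} about small gaps between zeros of $\zeta$ (rather than what is only conjectured, e.g.\ via Montgomery's pair correlation), all while keeping control of the renormalizations forced on us by the infinitude of the zeros. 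I expect the monotonicity of the energy functional to be the routine part; extracting an unconditional contradiction from it is where essentially all the difficulty lies.
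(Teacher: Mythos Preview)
This statement is a \emph{conjecture} in the paper, not a theorem; the paper contains no proof of it, because at the time of writing it was open. So there is no ``paper's own proof'' to compare your proposal against. What the paper does is explain the history, state the conjecture, record the best numerical lower bound $\Lambda > -1.14541\times 10^{-11}$ coming from the Csordas--Smith--Varga method of Lehmer pairs, and then move on to analogues in other settings.

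Your proposal is not a proof either, and to your credit you say so: you correctly isolate the zero-dynamics ODE $\dot x_j = 2\sum_{k\ne j}(x_j-x_k)^{-1}$ and the Lehmer-pair mechanism that underlies all known lower bounds on $\Lambda$, and you identify the obstruction --- turning ``$\Lambda<0$'' into a rigidity statement strong enough to contradict something we can actually \emph{prove} about gaps between zeta zeros. That obstruction is genuine. The step you describe as ``routine'' (monotonicity of a renormalized energy) is already delicate because the sums $\sum_{k\ne j}(x_j-x_k)^{-2}$ over all zeta zeros are only conditionally convergent and their behavior under the flow is exactly what CSV had to control carefully; and the step you flag as hard is indeed the whole problem. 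Knowing only that the $\liminf$ of normalized gaps is strictly less than $1$ is far weaker than what the CSV inequality requires of a Lehmer pair, so the contradiction you sketch does not close. In short: your outline accurately describes \emph{why} people believe Newman's conjecture and \emph{how} partial progress has been made, but it does not supply the missing idea, and the paper does not claim to supply it either.
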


In Newman's own words: ``The new conjecture is a quantitative version of the dictum that the Riemann hypothesis, if true, is only barely so.''

Csordas, Smith and Varga \cite{csv} used the differential equations governing the motion of the zeros to show that ``unusually'' close pairs of zeros can give lower bounds on $\Lambda$. \cite{saouter} builds on this method of Csordas et.~al.~and achieves the current best-known lower-bound: $\Lambda \geq -1.14541 \times 10^{-11}$.

A key step in the argument of \cite{csv} is the following.

\begin{lem}[Theorem 2.2 of \cite{csv}]\label{lem:double-zero-lower-bound}
If $\Xi_{t_0}(x)$ has a zero $x_0$ of order at least $2$, then $t_0 \leq \Lambda$.
\end{lem}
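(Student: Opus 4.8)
The plan is to track how the real zeros of $\Xi_t$ evolve in $t$ and use the hypothesis that at time $t_0$ two of them have collided. First I would set up the differential equation for the motion of the zeros. Writing $\Xi_t(x)$ as a function of the two variables $t$ and $x$, the heat-type equation $\partial_t \Xi_t = -\partial_x^2 \Xi_t$ holds (up to a sign convention coming from $e^{tu^2}$ in \eqref{eq:xi-t-x}). For $t > \Lambda$ the zeros $x_j(t)$ are real and simple, so by the implicit function theorem they vary real-analytically, and differentiating $\Xi_t(x_j(t)) = 0$ in $t$ gives an ODE for $\dot x_j(t)$; a standard computation (as in Csordas--Smith--Varga) shows the zeros satisfy a Calogero-type repulsion, and in particular no two real zeros that are simple can collide as $t$ decreases unless one first leaves the real axis.

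The core of the argument is then a proof by contradiction: suppose $\Xi_{t_0}$ has a zero $x_0$ of order $\ge 2$ but $t_0 > \Lambda$. By the Corollary, for every $t \ge \Lambda$ — in particular on a neighborhood of $t_0$ — $\Xi_t$ has only real zeros. Now I would argue that a multiple real zero is unstable under the flow in a way incompatible with all zeros staying real on both sides of $t_0$. Concretely, near $(t_0, x_0)$ write $\Xi_t(x) = c\,(x - x_0)^2 + b\,(t - t_0) + \text{higher order}$, where $c = \tfrac12 \partial_x^2 \Xi_{t_0}(x_0) \ne 0$ and $b = \partial_t \Xi_{t_0}(x_0) = -\partial_x^2 \Xi_{t_0}(x_0) = -2c$ by the PDE. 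Then to leading order the zero set near $(t_0,x_0)$ is $c(x-x_0)^2 = 2c(t - t_0)$, i.e. $(x - x_0)^2 = 2(t - t_0)$: for $t$ on one side of $t_0$ this forces $x - x_0$ to be purely imaginary, producing a non-real zero of $\Xi_t$ for $t$ arbitrarily close to (and on one side of) $t_0$. Since $t_0 > \Lambda$, those nearby values of $t$ can be taken $> \Lambda$ as well, contradicting part (1) of the Corollary. Hence $t_0 \le \Lambda$.

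The main obstacle I expect is making the local analysis near the multiple zero rigorous when the zero has order exactly $2$ versus higher even order, and controlling the sign of the relevant second derivative so that the quadratic term genuinely dominates and genuinely pushes a branch off the real axis on the correct side. One must verify $\partial_x^2 \Xi_{t_0}(x_0) \ne 0$ in the order-$2$ case (automatic) and handle higher-order vanishing by a Newton-polygon / Weierstrass preparation argument on the analytic function $(t,x) \mapsto \Xi_t(x)$, checking that the heat equation still forces a non-real branch; alternatively one can invoke the known structure theory for zeros of solutions of the backward/forward heat equation. A secondary technical point is justifying term-by-term differentiation of \eqref{eq:xi-t-x} in both $t$ and $x$, which follows from the rapid decay of $\Phi(u)$ and the resulting entireness of $\Xi_t(x)$ jointly in $(t,x)$, but should be stated cleanly before the local computation.
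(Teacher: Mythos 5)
The paper does not prove this lemma; it is stated as a quotation of Theorem~2.2 of \cite{csv} and the only content the paper offers around it is the heuristic, in the remark immediately following, that $F(x,t) = \Xi_t(x)$ solves the backward heat equation $\partial_t F + \partial_{xx}F = 0$. So there is no internal proof for your proposal to diverge from --- you are reconstructing an imported result --- but your reconstruction follows exactly the heuristic the paper gives and is in the spirit of what CSV (and Stopple, cited for the function-field analogue \Cref{lem:double-zero-lower-bound-fnfield}) actually do.

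For a zero of order exactly $2$ your argument is essentially complete. The PDE forces $\partial_t \Xi_{t_0}(x_0) = -\partial_{xx}\Xi_{t_0}(x_0) = -2c \ne 0$, the local model $c(x-x_0)^2 - 2c(t-t_0) = 0$ has non-real branches for $t < t_0$, and the Puiseux branches $x - x_0 = \pm i\sqrt{2(t_0 - t)} + O(t_0 - t)$ remain genuinely non-real for $|t_0 - t|$ small; taking $t \in \left(\max(\Lambda, t_0 - \delta),\, t_0\right)$ then contradicts part (1) of the Corollary. The obligation you flagged --- checking joint real-analyticity of $(t,x)\mapsto \Xi_t(x)$ from the decay of $\Phi$, so that differentiation under the integral and the Weierstrass/Puiseux machinery apply --- is routine but should indeed be stated first.

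The one genuine gap is the higher-order case, which you named as an ``obstacle'' but did not resolve. The hypothesis is order \emph{at least} $2$. If the order is $k \ge 3$, then $c = \tfrac12\partial_x^2\Xi_{t_0}(x_0) = 0$ and, by the PDE, $\partial_t\Xi_{t_0}(x_0) = 0$ as well, so your quadratic model degenerates to $0 = 0$ at leading order. To close this, observe that the PDE gives $\partial_x^j\partial_t^m \Xi_{t_0}(x_0) = (-1)^m\partial_x^{j+2m}\Xi_{t_0}(x_0)$, which forces the lowest weighted-degree part of the two-variable Taylor expansion (with $\deg(x-x_0) = 1$, $\deg(t-t_0) = 2$) to be $\tfrac{a_k}{k!}\,\tilde v_k(x-x_0, t-t_0)$, where $a_k = \partial_x^k\Xi_{t_0}(x_0) \ne 0$ and $\tilde v_k$ is the $k$-th backward heat polynomial; for every $k\ge 2$, $\tilde v_k(\xi,\tau)$ has non-real zeros once $\tau<0$ (its zeros scale as $\sqrt{\tau}$ times real Hermite zeros), and a Newton-polygon argument at the scale $|x-x_0|\asymp |t-t_0|^{1/2}$ shows the full $\Xi_t$ inherits this. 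As written, your proposal proves only the order-$2$ case, which is the case that matters in practice, but the lemma as stated is stronger.
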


\begin{rem}
Observe that if we set $F(x, t) = \Xi_t(x)$, then $F$ satisfies $\partial_t F + \partial_{xx} F = 0$, the \emph{backwards heat equation}. This PDE provides physical intuition for why \Cref{lem:double-zero-lower-bound} is true: as we \emph{decrease} $t$, the graph of $\Xi_t$ changes in accordance to the diffusion of heat. If $\Xi_{t_0}$ has a double zero, these zeros are likely to ``pop off'' the real line as we further decrease $t$. See \Cref{sec:backwards-heat-eq-example} for an example of this phenomenon.
\end{rem}

\begin{rem}
It is conjectured that all the zeros of $\zeta(s)$ are simple. If this is false, then \Cref{lem:double-zero-lower-bound} implies that Newman's conjecture is true. However, if the zeros of $\zeta(s)$ are indeed all simple, we cannot make any conclusions of Newman's conjecture. This is discussed in \Cref{rem:simplicity-no-number-field-analogue}.
\end{rem}

\subsection{Structure of this paper}

In \Cref{sec:conditions-for-generalized}, we look at conditions needed for a generalized version of Newman's conjecture. Stopple \cite{stopple} has shown that P\'olya's setup also holds for quadratic Dirichlet $L$-functions. We show it is possible to state a version of Newman's conjecture for automorphic $L$-functions. However, we only see the same behavior as before, so we quickly move on to rational function fields $\bbF_q(T)$.

As in the number field case, each quadratic Dirichlet $L$-function $L(s, \chi_D)$ in the function field setting also gives rise to a constant $\Lambda_D$. This case, which we look at in \Cref{sec:fn-fields} (the main section of the paper), exhibits \emph{very} different behavior. First of all, RH is true, so we know $\Lambda_D \leq 0$. Second, the statement of Newman's conjecture is different.

Whereas Newman's conjecture in number fields is $\Lambda \geq 0$, it is not necessarily true that $\Lambda_D \geq 0$ in the function field setting. 
In fact, we can have $\Lambda_D = -\infty$.
However, if we look at certain ``families'' $\mathcal F$ of $L$-functions (as discussed in \Cref{subsec:newman-families}), we have reason to believe that the supremum of $\Lambda_D$ over these family is nonnegative.

\begin{conj}[Newman's conjecture in the function fields setting]
Let $\mathcal F$ be a family of $L$-functions over a function field. Then
\begin{equation}
\sup_{D \in \mathcal F}
\Lambda_D
= 0
.
\end{equation}
\end{conj}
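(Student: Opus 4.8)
The plan is to prove the conjecture for a specific, well-chosen family, namely the family of quadratic Dirichlet $L$-functions $L(s, \chi_{D_p})$ coming from reductions modulo primes $p$ of a fixed square-free cubic $\mathcal D \in \bbZ[T]$ defining a non-CM elliptic curve (or its associated hyperelliptic curve). First I would set up P\'olya's construction in the function field setting: for each such $D_p$, the completed $L$-function is essentially a polynomial in $q^{-s}$ whose zeros lie on the critical line (RH for curves over finite fields, Weil), so running the analogue of \eqref{eq:classical-xi}--\eqref{eq:xi-t-x} produces $\Xi_t^{(D_p)}(x)$ and a De Bruijn--Newman constant $\Lambda_{D_p}$. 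Because the zero set is finite and explicitly controlled by the characteristic polynomial of Frobenius, I would derive a closed form for $\Lambda_{D_p}$; in the degree-$3$ (i.e., genus-$1$) case the $L$-polynomial is $1 - a_p(\mathcal D) q^{-s} + q^{1-2s}$, whose two zeros have the same modulus, and a short computation (exactly as in \Cref{lem:double-zero-lower-bound}, tracking when the two critical-line zeros of $\Xi_t$ collide) gives $\Lambda_{D_p} = \log\frac{|a_p(\mathcal D)|}{2\sqrt p}$. This already yields $\Lambda_{D_p} \le 0$ with equality exactly when $|a_p(\mathcal D)| = 2\sqrt p$.

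Next I would take the supremum over the family. Since each $\Lambda_{D_p} \le 0$, it suffices to exhibit a sequence of primes $p_n$ with $\Lambda_{D_{p_n}} \to 0$, equivalently with $\frac{|a_{p_n}(\mathcal D)|}{2\sqrt{p_n}} \to 1$. This is precisely where the Sato--Tate conjecture (now a theorem for non-CM elliptic curves over $\bbQ$, and in the relevant function-field generalities a theorem of Katz--Sarnak / Deligne equidistribution) enters: writing $a_p(\mathcal D) = 2\sqrt p \cos\theta_p$, Sato--Tate says the angles $\theta_p$ equidistribute with respect to the measure $\frac{2}{\pi}\sin^2\theta\,d\theta$ on $[0,\pi]$, which assigns positive mass to every neighbourhood of $\theta = 0$. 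Hence for every $\varepsilon > 0$ there are infinitely many $p$ with $\cos\theta_p > 1 - \varepsilon$, and choosing $p_n$ so that $\cos\theta_{p_n} \to 1$ forces $\Lambda_{D_{p_n}} = \log(\cos\theta_{p_n}) \to 0$. Combined with the upper bound, this gives $\sup_p \Lambda_{D_p} = 0$.

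For the general statement I would argue that the family-supremum formulation is the ``right'' one: for an individual $D$ one can have $a_p(\mathcal D) = 0$ (supersingular reduction) and then $\Lambda_D = -\infty$, so no pointwise lower bound can hold, but averaging/equidistribution over the family restores a meaningful zero. I would then indicate how the same scheme runs for higher-degree $\mathcal D$ (genus $g \ge 2$): the $L$-polynomial has $2g$ zeros on $|u| = \sqrt q$ with Frobenius angles $\{\pm\theta_{j,p}\}$, the Newman constant is governed by the closest-spaced pair of consecutive angles (the first collision as $t$ decreases), so $\Lambda_{D_p}$ is a function of the minimal gap, and one invokes the relevant equidistribution law for the conjugacy classes of Frobenius in $\mathrm{USp}(2g)$ (Katz--Sarnak) to produce a subsequence along which that minimal gap shrinks to $0$ at the correct rate, pushing $\Lambda_{D_{p_n}} \to 0$.

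The main obstacle is the second bullet in two respects. First, in the higher-genus case one needs not merely that \emph{some} angle gets close to a target but that two \emph{consecutive} angles among the $2g$ Frobenius angles get anomalously close \emph{and} that this is exactly the configuration that maximizes $\Lambda_{D_p}$ — i.e., I must verify that the closest pair of zeros is what controls the De Bruijn--Newman constant (this is the analogue of \Cref{lem:double-zero-lower-bound} and requires checking the sign of the relevant derivative in the heat-flow, ruling out that some other feature of $\Xi_t$ goes complex first). Second, Sato--Tate / Katz--Sarnak equidistribution is only an asymptotic statement with no effective error term strong enough to pin down a \emph{single} $p$ with $\Lambda_{D_p} = 0$; that is precisely why the conjecture is phrased with a supremum rather than a maximum, and one should not expect the supremum to be attained. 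Establishing that the supremum is $0$ rather than some negative number is therefore entirely contingent on the support of the Sato--Tate measure containing the endpoint $\theta = 0$, which it does, so the argument closes — but any attempt to strengthen ``$\sup = 0$'' to ``$\max = 0$'' would run into the ineffectivity barrier.
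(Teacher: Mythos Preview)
The displayed statement is a \emph{conjecture} in the paper, not a theorem: it is not proved there for a general family $\mathcal F$, and as literally stated it cannot hold (take $\mathcal F$ to be a singleton $\{D\}$ with simple zeros; then $\sup_{\mathcal F}\Lambda_D = \Lambda_D < 0$ by the simplicity lemma). What the paper \emph{does} prove is the special case of a fixed square-free cubic $\mathcal D\in\bbZ[T]$ with $\mathcal F = \{D_p : p \text{ prime}\}$, and for that case your argument is essentially identical to the paper's: the explicit form $\Xi_t(x,\chi_{D_p}) = -a_p(\mathcal D) + 2\sqrt p\,e^t\cos x$ yields the closed formula $\Lambda_{D_p} = \log\tfrac{|a_p(\mathcal D)|}{2\sqrt p}$, and Sato--Tate supplies primes with $\cos\theta_{p_n}\to 1$, whence $\Lambda_{D_{p_n}}\to 0$. (The paper also covers the CM case, which you exclude unnecessarily.)

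Your higher-genus sketch, however, departs from the paper and contains a real gap. You assert that for $g\ge 2$ the constant $\Lambda_{D_p}$ is ``a function of the minimal gap'' among the Frobenius angles, determined by the first collision of adjacent zeros as $t$ decreases. The paper explicitly flags exact formulas as out of reach (``when $g\ge 2$ \ldots\ making it much harder to find the explicit expression of $\Lambda_{D_p}$''), and the double-zero lemma only yields the one-sided bound $t_0\le\Lambda_D$ at a collision time, not equality; there is no reason the closest pair at $t=0$ should be the first to collide under the backward heat flow, so your ``function of the minimal gap'' claim is unjustified. The paper sidesteps this entirely: rather than seeking an exact formula, it adapts Stopple's inequality (the function-field analogue of the main result of \cite{csv}) to get a quantitative lower bound on $\Lambda_D$ in terms of the \emph{lowest} zero $\gamma_1$, together with a mild size condition $\tfrac12\le\tilde\gamma_2\le 2$ on the second zero, and then invokes Katz--Sarnak to produce, in families with $g,q\to\infty$, members whose $\gamma_1$ is small enough to force $\Lambda_D\to 0$. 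Your instinct to use equidistribution of Frobenius is therefore aligned with the paper, but the mechanism the paper actually uses is low-lying first zeros plus Stopple's bound, not minimal consecutive gaps plus an unavailable exact collision formula.
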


For an example of a family, suppose we fix an elliptic curve $y^2 = \mathcal D(T)$ over $\bbQ$, and look at the polynomials $D_p \in \bbF_p[T]$ obtained by reducing $\mathcal D$ modulo $p$. Let $a_p(\mathcal D)$ be the trace of Frobenius of the elliptic curve $y^2 = \mathcal D(T)$. In \Cref{subsec:deg-3-case}, we prove that Newman's conjecture is true for this family, and explicitly relate the Newman constant to $a_p(\mathcal D)$.

\begin{thm}[Newman's conjecture for fixed $\mathcal D$, $\deg \mathcal D = 3$]
Let $\mathcal D \in \bbZ[T]$ be a square-free polynomial of degree $3$. Let $D_p$ be the polynomial in $\bbF_p[T]$ obtained by reducing $\mathcal D$ modulo $p$. Then \begin{equation}
\Lambda_{D_p} \ = \  \log \frac{|a_p(\mathcal D)|}{2\sqrt{p}},
\end{equation}
which implies $\sup_{p} \Lambda_{D_p} = 0$.
\end{thm}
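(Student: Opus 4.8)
The plan is to compute $\Xi_{D_p,t}(x)$ explicitly; once that is done, the theorem reduces to an elementary fact about zeros of $\cos w - c$, and the statement about the supremum follows from Sato--Tate.

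First I would dispose of the finitely many primes dividing the leading coefficient or the discriminant of $\mathcal D$ (together with $p=2$); for every remaining $p$ the reduction $D_p\in\bbF_p[T]$ is square-free of degree $3$, the smooth projective model of $y^2=D_p(T)$ is an elliptic curve $E_p/\bbF_p$, and $L(u,\chi_{D_p})$, the numerator of its zeta function, equals $1-a_p(\mathcal D)\,u+p\,u^2$ with $u=p^{-s}$ (Weil). Running the function field version of P\'olya's setup from \Cref{sec:fn-fields} on this polynomial, the functional equation passes us to the symmetrized Laurent polynomial $u^{-1}L(u,\chi_{D_p})=u^{-1}-a_p(\mathcal D)+pu$, whose value at $u=p^{-1/2-ix}$ is $2\sqrt p\cos(x\log p)-a_p(\mathcal D)$, and the heat-type deformation fixes the central coefficient while scaling the two outer ones by a common exponential. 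Thus $\Xi_{D_p,t}(x)$ has, up to a nonvanishing constant, the form $A(t)\cos(x\log p)+B$ with $B$ independent of $t$ and $A(t)$ a positive multiple of $e^{\gamma t}$ for some $\gamma>0$; in the normalization fixed in \Cref{sec:fn-fields} one has $\gamma=1$, and after dividing out the constant,
\begin{equation}\label{eq:Xi-Dp-form}
\Xi_{D_p,t}(x)\ =\ e^{t}\cos(x\log p)-\frac{a_p(\mathcal D)}{2\sqrt p}.
\end{equation}

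Next I would read off the real-zero threshold. Writing $w=\sigma+i\tau$, we have $\cos w=\cos\sigma\cosh\tau-i\sin\sigma\sinh\tau$, so $\cos w\in\bbR$ forces $\tau=0$ or $\sigma\in\pi\bbZ$, and in the latter case $|\cos w|=\cosh\tau\ge 1$; hence for real $c$ the equation $\cos w=c$ has all solutions real precisely when $|c|\le 1$, and otherwise has non-real solutions on the lines $\Re w\in\pi\bbZ$. Applied to \eqref{eq:Xi-Dp-form} with $c=\tfrac{a_p(\mathcal D)}{2\sqrt p}e^{-t}$, this says $\Xi_{D_p,t}$ has only real zeros if and only if $t\ge\log\frac{|a_p(\mathcal D)|}{2\sqrt p}$ (reading $\log 0=-\infty$ when $a_p(\mathcal D)=0$). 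Since the admissible set of $t$ is the closed half-line with this left endpoint, the function field analogue of the De Bruijn--Newman corollary identifies that endpoint with $\Lambda_{D_p}$, giving $\Lambda_{D_p}=\log\frac{|a_p(\mathcal D)|}{2\sqrt p}$. For the supremum, the Hasse bound $|a_p(\mathcal D)|\le 2\sqrt p$ (i.e.\ RH for $L(s,\chi_{D_p})$) gives $\Lambda_{D_p}\le 0$ for all $p$, so $\sup_p\Lambda_{D_p}\le 0$; for the reverse, write $a_p(\mathcal D)=2\sqrt p\cos\theta_p$ with $\theta_p\in[0,\pi]$, so $\Lambda_{D_p}=\log|\cos\theta_p|$, and it suffices to produce primes with $\min(\theta_p,\pi-\theta_p)\to 0$. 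When $y^2=\mathcal D(T)$ is non-CM this is exactly the Sato--Tate theorem: the $\theta_p$ equidistribute with respect to $\tfrac{2}{\pi}\sin^2\theta\,d\theta$, which charges every interval $[0,\epsilon)$, so picking an increasing sequence $p_n$ with $\theta_{p_n}<1/n$ yields $\Lambda_{D_{p_n}}=\log|\cos\theta_{p_n}|\to 0$, hence $\sup_p\Lambda_{D_p}=0$. (In the CM case the same follows from classical equidistribution of the associated Hecke character over the ordinary primes.)

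I expect no obstacle internal to this argument: the zero analysis above is a one-line computation, and the two substantive inputs are imported — the shape $L(u,\chi_{D_p})=1-a_p u+pu^2$ and the effect of the deformation come from \Cref{sec:fn-fields}, while the lower bound $\sup_p\Lambda_{D_p}\ge 0$ rests on Sato--Tate, which is the one genuinely deep ingredient. The finitely many primes of bad reduction are harmless, since deleting finitely many terms does not change a supremum.
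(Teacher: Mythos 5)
Your proposal is correct and follows essentially the same route as the paper: compute $\Xi_t$ explicitly in the genus-one case as a shifted cosine, observe that the equation $\cos w = c$ (with $c$ real) has only real solutions exactly when $|c|\le 1$ so that $\Lambda_{D_p}=\log\frac{|a_p(\mathcal D)|}{2\sqrt p}$, and invoke Sato--Tate to produce primes with $|a_p|/(2\sqrt p)\to 1$. You are somewhat more careful than the paper at a few points — explicitly excluding the finitely many primes of bad reduction, spelling out via $\cos(\sigma+i\tau)=\cos\sigma\cosh\tau - i\sin\sigma\sinh\tau$ why the real-zero threshold is an if-and-only-if, and noting the CM case requires classical Hecke/Deuring equidistribution rather than the Clozel--Harris--Shepherd-Barron--Taylor theorem — but these are refinements of, not departures from, the paper's argument (one trivial cosmetic point: in the paper's normalization the argument of the cosine is $x$, not $x\log p$, since $\Xi(x,\chi_D)$ is defined by evaluating at $s=\tfrac12 + i\tfrac{x}{\log q}$).
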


To show the supremum is zero, we use the recent proof of the Sato--Tate conjecture \cite{B-LGHT, satotate1, satotate3, satotate2}. This implies that the Newman conjecture for function fields is connected to deep results in number theory.

Next, we change our approach to Newman's conjecture in function fields and use results from random matrix theory to support our conjecture. By relating random matrix theory statistics to the distributions of the zeros of our functions $\Xi(x, \chi_D)$, we prove Newman's conjecture for a different family. For detailed statements, see \Cref{subsec:lowlyingzerosandconnections}.

Finally, in \Cref{sec:numerical} we examine the results of some numerical computations. In particular, we observe that as we increase the degree, we find elements $D\in\bbF_3[T]$ such that the Newman constants $\Lambda_D$ approach zero.

\section{Conditions for a generalized Newman's conjecture}\label{sec:conditions-for-generalized}

\subsection{Stopple's generalization of Newman's conjecture}

Stopple \cite{stopple} showed that if $D$ is a fundamental discriminant and $\chi_D(n)$ is the Kronecker symbol $(\frac{D}{n})$, then we can apply P\'olya's setup for $\zeta(s)$ to the Dirichlet $L$-function $L(s, \chi_D)$. This gives us an analogue of \eqref{eq:xi-t-x}:
\begin{equation}\label{eq:xi-t-chi-D-for-number-field}
\Xi_t(x, \chi_D)
\ :=\
\int_{0}^\infty e^{tu^2} \Phi(u, \chi_D) (e^{i u x} + e^{-iux}) \, du.
\end{equation}
Each $D$ has its own De Bruijn--Newman constant $\Lambda_D$, and most of the techniques in \cite{csv} for attaining lower bounds on $\Lambda$ carry over to $\Lambda_D$.

\begin{conj}[Newman's conjecture for quadratic Dirichlet $L$-functions]\label{conj:newman-dirichlet-l-functions}
Let $D \in \bbZ$ be a fundamental discriminant. Then $\Lambda_D \geq 0$.
\end{conj}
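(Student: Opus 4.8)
The plan is to reduce everything to an explicit finite computation, exploiting the fact that in the function field setting the objects in P\'olya's setup are finite sums rather than integrals. First I would record the relevant structure of the $L$-function: for $\mathcal D \in \bbZ[T]$ squarefree of degree $3$ and a prime $p$ of good reduction, the curve $y^2 = D_p(T)$ is an elliptic curve over $\bbF_p$, and the numerator of its zeta function is precisely the quadratic $L$-polynomial $L(u, \chi_{D_p})$ in $u = p^{-s}$. This polynomial has degree $2$, factors as $(1-\alpha u)(1-\bar\alpha u)$ with $|\alpha| = \sqrt p$ by the Riemann hypothesis for curves (Weil), and satisfies $\alpha + \bar\alpha = a_p(\mathcal D)$; in particular the Hasse bound gives $|a_p(\mathcal D)| \le 2\sqrt p$. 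The finitely many primes of bad reduction I set aside: the Riemann hypothesis holds for every function field $L$-function, so $\Lambda_{D_p} \le 0$ for all $p$ and these primes cannot affect the supremum.

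Second, I would run the function field version of P\'olya's setup (developed in \Cref{sec:fn-fields}) on this $L$-function. Completing $L(u, \chi_{D_p})$ and substituting $s = \tfrac12 + ix$ makes $u$ traverse the circle $|u| = p^{-1/2}$; in the angular variable $\theta$ natural to the function field this yields the real trigonometric polynomial
\begin{equation*}
\Xi(x, \chi_{D_p}) \ = \ \sqrt p\,\bigl(e^{i\theta} + e^{-i\theta}\bigr) - a_p(\mathcal D) \ = \ 2\sqrt p\,\cos\theta - a_p(\mathcal D) .
\end{equation*}
Its Fourier transform $\Phi(\cdot, \chi_{D_p})$ is therefore supported on only two frequencies: a point mass $\sqrt p$ at the one nonzero frequency (normalized to $1$ in the function field convention, where $L(s, \chi_{D_p})$ is periodic in $\Im s$) and a point mass $-a_p(\mathcal D)/2$ at frequency $0$. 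The De Bruijn--Newman deformation multiplies the mass at frequency $u$ by $e^{tu^2}$, hence leaves the constant term untouched and scales the $\cos\theta$ term by $e^{t}$, producing the fully explicit family
\begin{equation*}
\Xi_t(x, \chi_{D_p}) \ = \ 2\sqrt p\,e^{t}\cos\theta - a_p(\mathcal D) .
\end{equation*}

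Third, I would read $\Lambda_{D_p}$ off this formula. The zeros of $\Xi_t(\cdot, \chi_{D_p})$ solve $\cos\theta = \tfrac{a_p(\mathcal D)}{2\sqrt p}\,e^{-t}$, and these are all real precisely when $\tfrac{|a_p(\mathcal D)|}{2\sqrt p}\,e^{-t} \le 1$, that is when $t \ge \log\tfrac{|a_p(\mathcal D)|}{2\sqrt p}$; as soon as $t$ drops below that value a conjugate pair of zeros leaves the real axis. This gives $\Lambda_{D_p} = \log\tfrac{|a_p(\mathcal D)|}{2\sqrt p}$ (with $\log 0 = -\infty$), and one checks that $a_p(\mathcal D) = 0$ is exactly the case $\Lambda_{D_p} = -\infty$ foreseen in the introduction, since then $\Xi_t \propto \cos\theta$ has only real zeros for every $t$. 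For the final clause, the Hasse bound already forces $\Lambda_{D_p} \le 0$, so it suffices to produce primes with $\tfrac{|a_p(\mathcal D)|}{2\sqrt p} \to 1$: writing $a_p(\mathcal D) = 2\sqrt p\cos\theta_p$, the Sato--Tate theorem \cite{B-LGHT, satotate1, satotate3, satotate2} says the angles $\theta_p$ equidistribute with respect to a measure of full support on $[0,\pi]$ when the curve is non-CM, so $\cos\theta_p \to \pm 1$ along a subsequence and $\Lambda_{D_{p_n}} \to 0$; in the CM case the same conclusion follows from the (uniform) equidistribution of $\theta_p$ over the split primes. Hence $\sup_p \Lambda_{D_p} = 0$.

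The first three steps are essentially mechanical once the function field P\'olya setup is in place, so the genuine content --- and the step I expect to be the main obstacle --- is the equality of the supremum with $0$, which is exactly where a deep input, the Sato--Tate equidistribution theorem, becomes unavoidable. A secondary point that needs care is the normalization: the clean identity $\Lambda_{D_p} = \log\tfrac{|a_p(\mathcal D)|}{2\sqrt p}$, rather than that quantity divided by $(\log p)^2$, depends on using the integer-frequency normalization native to the function field and then reconciling it with the archimedean version of P\'olya's setup, which is a small but necessary check.
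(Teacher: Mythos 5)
You have misidentified the statement. Conjecture~\ref{conj:newman-dirichlet-l-functions} is about $D \in \bbZ$ a \emph{fundamental discriminant} in the classical number field setting --- that is, about the archimedean Dirichlet $L$-function $L(s,\chi_D)$ over $\bbQ$ and Stopple's De Bruijn--Newman constant $\Lambda_D$ built from an honest Fourier \emph{integral}. It is a conjecture, not a theorem: the paper does not prove it, and no proof is known (it would be a genuine extension of Newman's original open problem). Your entire argument is instead about $\mathcal D \in \bbZ[T]$ squarefree of degree $3$ reduced modulo $p$, i.e.\ the function field $L$-function of the elliptic curve $y^2 = D_p(T)$ over $\bbF_p$. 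That is the subject of \Cref{subsec:deg-3-case}, and the argument you give --- reducing to the two-term trigonometric polynomial $\Xi_t = 2\sqrt p\, e^t \cos\theta - a_p(\mathcal D)$, reading off $\Lambda_{D_p} = \log\tfrac{|a_p(\mathcal D)|}{2\sqrt p}$, and invoking Sato--Tate to drive the supremum to $0$ --- is essentially identical to the paper's proof of that theorem. But it proves a different statement, not the one asked.

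The gap cannot be patched by analogy, because the two settings diverge exactly where your argument leans hardest. Your reduction to a finite computation depends on $\Xi(\cdot, \chi_D)$ being a trigonometric \emph{polynomial} (finitely many Fourier coefficients), which happens precisely because a function field $L$-function is a polynomial in $q^{-s}$. In the number field case $\Phi(u,\chi_D)$ is a smooth rapidly decaying function on $\bbR$ and $\Xi_t$ has infinitely many zeros, so there is no closed-form $\Lambda_D$. Worse, the very shape of the conjecture differs: in the function field setting the paper proves (\Cref{lem:simple-implies-lambda-negative}) that $\Lambda_D < 0$ for each individual $D$ with simple zeros, so the pointwise conjecture $\Lambda_D \geq 0$ is \emph{false} there and one can only hope for $\sup_{D \in \mathcal F}\Lambda_D \geq 0$ over a family --- whereas \Cref{conj:newman-dirichlet-l-functions} asserts the pointwise inequality for every fundamental discriminant. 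So not only is your argument about the wrong objects, the statement it establishes is the supremum version, which you then cannot specialize back to a single $D$. To address \Cref{conj:newman-dirichlet-l-functions} you would need something in the spirit of Csordas--Smith--Varga or Stopple: differential-equation control on closely spaced pairs of zeros of the archimedean $\Xi_t$, and there the best known result is only the numerical lower bound $\Lambda_D > -1.13\cdot 10^{-7}$ for a particular $D$, not nonnegativity.
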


Stopple investigated a variation of this conjecture.

\begin{conj}[Newman's conjecture for quadratic Dirichlet $L$-functions, weaker form]\label{conj:newman-dirichlet-l-functions-weak}
 We have $\sup_D \Lambda_D \geq 0$, where the supremum is taken over all fundamental discriminants $D$.
\end{conj}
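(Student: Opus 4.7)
The plan is to attack this weak form of Newman's conjecture by extending the Csordas--Smith--Varga strategy from a single $L$-function to the entire family of quadratic Dirichlet $L$-functions, leveraging \Cref{lem:double-zero-lower-bound}.

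First, I would establish a quantitative local version of \Cref{lem:double-zero-lower-bound}: if $\Xi_0(x,\chi_D)$ has two simple real zeros $x_1 < x_2$ separated by a gap $\delta := x_2 - x_1$, then because $F(x,t) := \Xi_t(x,\chi_D)$ satisfies the backwards heat equation $\partial_t F + \partial_{xx} F = 0$, the motion of these zeros as $t$ decreases can be tracked via the implicit function theorem applied to $F(x,t)=0$ together with a local Taylor expansion of $\Xi_0$ near $x_1$. A CSV-type computation should show that the two zeros must coalesce into a double zero at some $t_0 < 0$ with $|t_0| \leq C\delta^2$, where $C$ depends on derivatives of $\Xi_0(x,\chi_D)$ on the interval $[x_1,x_2]$. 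By \Cref{lem:double-zero-lower-bound}, this yields $\Lambda_D \geq t_0 \geq -C\delta^2$.

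Second, I would show that as $D$ ranges over fundamental discriminants, the infimum of suitably normalized gaps between consecutive real zeros of $\Xi_0(x,\chi_D)$ tends to zero. Heuristically this is predicted by the Katz--Sarnak philosophy: the family $\{L(s,\chi_D)\}$ is expected to be symplectic, and the USp ensemble has a continuous nearest-neighbor spacing distribution whose support extends down to zero. Hence one expects arbitrarily small (unnormalized) gaps as $D$ varies, which combined with step one would give $\sup_D \Lambda_D \geq 0$.

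The main obstacle is making the second step unconditional. One-level density results for the low-lying zeros of the quadratic Dirichlet family are known (\"Ozl\"uk--Snyder, Rubinstein, and others), but genuine small-gap statements --- producing $D$ with pairs of real zeros of $\Xi_0(x,\chi_D)$ within distance $\varepsilon$ of each other for arbitrary $\varepsilon > 0$ --- appear to be beyond current technology without strong additional input such as GRH, a detailed analysis of $n$-level correlations for this family, or an explicit algebraic construction of $D$ with provably small zero gaps. This difficulty is presumably why the assertion is left as \Cref{conj:newman-dirichlet-l-functions-weak} rather than established as a theorem; the best known lower bound $\Lambda_D \geq -1.17 \cdot 10^{-7}$ cited in the abstract corresponds to the closest pair of low-lying zeros found by computer search to date, rather than an asymptotic statement.
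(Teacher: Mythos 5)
This statement is a \emph{conjecture} in the paper, not a theorem; the paper offers no proof of it, so there is no ``paper's own proof'' against which to check your argument. You correctly recognize this in your closing paragraph, and your diagnosis of the obstruction---that one cannot at present unconditionally exhibit fundamental discriminants $D$ for which $\Xi_0(x,\chi_D)$ has a pair of real zeros at arbitrarily small separation---is accurate; the paper itself only records Stopple's numerical lower bound $\Lambda_D > -1.13\cdot 10^{-7}$ for a single computed $D$ and makes no stronger claim in the number-field setting.

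Your outlined two-step strategy is essentially the Csordas--Smith--Varga/Stopple strategy that the paper alludes to, with one small difference worth flagging. You propose hunting for an arbitrary close pair $x_1 < x_2$ of real zeros. Stopple's refinement, which the paper emphasizes, is to exploit the evenness $\Xi(x,\chi_D)=\Xi(-x,\chi_D)$ that comes from the real, even functional equation: one then needs only a single unusually low-lying zero $\gamma_1>0$, since the close pair $\{-\gamma_1,\gamma_1\}$ arises for free by symmetry. This replaces the ``small gap somewhere'' condition with a one-sided ``small first zero'' criterion (the quantitative version $5\gamma_1^2 G < 1$ appears in the paper's footnote and in its function-field analogue), and the Katz--Sarnak symplectic-symmetry heuristic for the quadratic family that you invoke in step two is precisely the heuristic justifying why such $D$ should exist. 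So your plan is compatible with, and morally the same as, the approach the paper has in mind; it remains a conjecture because, as you correctly observe, the existence of infinitely many such $D$ (equivalently, a positive-density repulsion-violation statement for the lowest zero in this family) is not known unconditionally.
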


Note that \Cref{conj:newman-dirichlet-l-functions} implies \Cref{conj:newman-dirichlet-l-functions-weak}. Instead of looking for close pairs of zeros along the real line, Stopple looks for $L$-functions $L(s, \chi_D)$ with ``unusually'' low-lying zeros.%
\footnote{Let the positive zeros of $\Xi(x, \chi_D)$ be denoted $\gamma_1, \gamma_2, \ldots$ with $0 < \gamma_1 < \gamma_2 < \cdots$. Define \[ G\ = \ \sum_{|j| \geq 2}  \left[ \frac{1}{(-\gamma_{1} - \gamma_j)^2} + \frac{1}{(\gamma_{1} - \gamma_j)^2}  \right]. \] Then the zero $\gamma_1$ is ``unusually'' low-lying, in the sense given in \cite{stopple}, if $5 \gamma_1^2 G < 1$. Stopple calls such $D$ ``Low discriminants'' (``Low'' is a person's name).} If an $L$-function has an unusually low-lying zero $\gamma$, then the zeros $\pm \gamma$ would then form a close pair.

Stopple found that for $D = -175990483$, we have $-1.13 \cdot 10^{-7} < \Lambda_D$.

\subsection{Sufficient conditions for generalization}\label{sec:conditions-for-generalization}

Let $L(s, f)$ be the $L$-function associated with some object $f$. In accordance with notation introduced earlier, let $\xi(s, f)$ be the completed $L$-function and let $\Xi(x, f) = \xi(\frac{1}{2} + ix, f)$.

If we try to define $\Xi_t(x, f)$ analogously, we need the following.

\begin{enumerate}
\item
$\Xi(\cdotwild, f)$ has to restrict to a $\bbR \to \bbR$ function, so that we can define the Fourier transform $\Phi(u, f)$, as in \eqref{eq:fourier-transform-of-xi}. It is sufficient to have the functional equation $\xi(s, f) = \xi(1-s,f)$.

\item
$\Phi(u, f)$ has to have extremely rapid decay in order for the integral in \eqref{eq:xi-t-x} to converge for each $t \in \bbR$. It is sufficient to have $\Phi(u, f) = O(e^{-|u|^{2+\epsilon}})$ for some $\epsilon > 0$.
\end{enumerate}

Usually, the rapid decay of $\Phi(u, f)$ can be seen because it has an infinite sum representation. For instance, in the case of the Riemann zeta function, we have
\begin{equation}
\Phi(u)
\ = \
2
\sum_{n=1}^\infty
(2n^4 \pi^2 e^{9u/2} - 3n^2 \pi e^{5u/2})
e^{-n^2 \pi e^{2u}}
,
\end{equation}
which shows that $\Phi(u)$ decays as quickly as a double exponential. See \cite[(10.1.4)]{titchmarsh}.

\subsection{Slight modifications of P\'olya's setup and limitations}

The most stringent requirement is the even functional equation: $\xi(s, f) = \xi(1-s,f)$. This is why Stopple did not investigate \emph{all} Dirichlet $L$-functions -- the complex $L$-functions do not have the proper type of symmetry.

In general, a completed $L$-function satisfies a functional equation of the form $\xi(s, f) = \epsilon_f \xi(1-s, \overline f)$, where $|\epsilon_f| = 1$. If we define $\Xi(x, f) = \xi(\frac{1}{2} + ix, f)$, then $\Xi(x, f) = \epsilon_f \Xi(-x, \overline f)$. Thus, we need $f$ to be self-dual (i.e., $f = \overline f$) and we need the root number $\epsilon_f$ to be $1$.

There are two straightforward attempts to ``fix'' a bad functional equation, but they both fail when we attempt to state Newman's conjecture for the $L$-function.

\begin{enumerate}
\item
We replace $\xi(s, f)$ with $\tilde \xi(s, f) := |\xi(s, f)|$. Then $\tilde \xi(s, f) = \tilde \xi(1-s, f)$ for any $L$-function. However, $\tilde \xi(s, f)$ is no longer a smooth function in $s$. Thus, we lose the backwards heat equation and other desirable properties and the results of \cite{csv} do not carry over.
\item
We replace $\xi(s, f)$ with $\tilde \xi(s, f) := |\xi(s, f)|^2$, which is smooth in $s$. In this case we have an analogue of \Cref{lem:double-zero-lower-bound}, but since every zero of $\tilde \xi(s, f)$ is doubled, the lemma would make Newman's conjecture for $\tilde \xi(s, f)$ trivially true.
\end{enumerate}

If we have an $L$-function with odd functional equation $\xi(s, f) = -\xi(1-s, f)$, what we \emph{can} do is define $\tilde \xi(s, f) = \frac{i}{s-1/2} \xi(s, f)$, which will then satisfy the conditions in \Cref{sec:conditions-for-generalization}.

Alternatively, we can consider products of different $L$-functions. For example, if we have two odd $L$-functions $\xi(s, f)$ and $\xi(s, g)$, then the product $\tilde\xi(s) := \xi(s, f)\xi(s, g)$ has the desired even symmetry. Thus there is a P\'olya setup for $\tilde \xi$, and a corresponding constant $\Lambda$. In this case, it is the distribution of union of the zeros of each $L$-function that become relevant. (If the two $L$-functions share a zero then we have a double zero and Newman's conjecture is trivially true.)


Because of the lack of a proper functional equation, we cannot generalize P\'olya's setup to (for example) the Hurwitz zeta function or linear combinations of $L$-functions.

\subsection{Automorphic $L$-functions}

One class of examples which can be analyzed with these methods is $H_k^+(N)$, the holomorphic cuspidal newforms of weight $k$ and level $N$ with even functional equation.

\begin{lem}
As in \cite[Section 3]{ils}, consider the Hecke $L$-function given by $L(s, f) = \sum_{n \geq 1} \lambda_f(n) n^{-s}$ for $f \in H_k^+(N)$, and let $\Xi(x, f)$ be the completed $L$-function evaluated at $s = \frac{1}{2} + ix$. Then we can follow P\'olya's setup and introduce the analogous deformation $\Xi_t(x, f)$, so there is a De Bruijn--Newman constant $\Lambda_f$ for each $f \in H_k^+(N)$.
\end{lem}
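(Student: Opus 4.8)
The plan is to verify that the Hecke $L$-function $L(s,f)$ for $f \in H_k^+(N)$ satisfies the two conditions laid out in \Cref{sec:conditions-for-generalization}, namely the even functional equation and the double-exponential decay of the Fourier transform $\Phi(u,f)$. Once these are in place, the construction of $\Xi_t(x,f)$ and the existence of $\Lambda_f$ follow verbatim from P\'olya's setup and the De Bruijn--Newman argument (\Cref{lem:real-zeros-stay-real} together with an analogue of \Cref{lem:polya-false}), exactly as in the classical case and in Stopple's treatment of quadratic Dirichlet $L$-functions.

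First I would recall the standard completed $L$-function for $f \in H_k(N)$: one sets $\xi(s,f) = N^{s/2} (2\pi)^{-s} \Gamma\!\left(s + \tfrac{k-1}{2}\right) L(s,f)$ (with the usual analytic normalization so that the critical line is $\Re s = \tfrac12$), which satisfies $\xi(s,f) = \varepsilon_f\, \xi(1-s, f)$ with root number $\varepsilon_f = \pm 1$. Restricting to $f \in H_k^+(N)$ means precisely that $\varepsilon_f = +1$, so $\xi(s,f) = \xi(1-s,f)$; hence $\Xi(x,f) := \xi(\tfrac12 + ix, f)$ is real for real $x$, which is condition (1). This is the only place the hypothesis $f \in H_k^+(N)$ (as opposed to general $H_k(N)$) is used, and it is exactly the analogue of why Stopple restricted to real (self-dual) characters.

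Next I would establish condition (2), the decay of $\Phi(u,f)$. The cleanest route is to write down an explicit series for $\Phi(u,f)$ mirroring the classical formula: inverting the Mellin transform in the definition of $\xi(s,f)$ expresses $\Xi(x,f)$ as (a cosine transform of) a theta-like series built from the Fourier coefficients $\lambda_f(n)$ and the Bessel/Whittaker kernel coming from $\Gamma(s + \tfrac{k-1}{2})$; changing variables $x \mapsto u$ as in P\'olya's setup produces $\Phi(u,f) = \sum_{n\geq1} \lambda_f(n)\, c(n, u)$ where $c(n,u)$ decays like $\exp(-c\, n\, e^{2u})$ for $u \to +\infty$ (and the functional equation controls $u \to -\infty$). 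Using the Hecke bound $|\lambda_f(n)| \ll_\epsilon n^{1/2+\epsilon}$ (or even the trivial divisor bound), the series converges and inherits the double-exponential decay $\Phi(u,f) = O(e^{-|u|^{2+\epsilon}})$, so $\int_0^\infty e^{tu^2}\Phi(u,f)(e^{iux}+e^{-iux})\,du$ converges for every $t \in \bbR$ and defines $\Xi_t(x,f)$. Finally, I would note that \Cref{lem:real-zeros-stay-real} is a general fact about such deformations, and a Newman-type lemma (non-real zeros appear for $t$ very negative) holds here by the same argument as in \cite{newman}, so the infimum over $t$ for which $\Xi_t(\cdot,f)$ has only real zeros is a well-defined real number $\Lambda_f$.

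The main obstacle is the honest verification of the double-exponential decay of $\Phi(u,f)$: one must produce the explicit theta-series representation for the weight-$k$, level-$N$ completed $L$-function and check that the archimedean kernel (governed by $\Gamma(s + \tfrac{k-1}{2})$ rather than $\Gamma(s/2)$) still forces decay faster than $e^{-|u|^2}$ after the P\'olya change of variables. This is routine — it parallels \cite[(10.1.4)]{titchmarsh} and Stopple's computation in \cite{stopple} — but it is the only step requiring real work; everything else (the functional equation, the De Bruijn monotonicity, the existence of $\Lambda_f$) transfers formally. I would also remark, as the paper does elsewhere, that this construction, while valid, only reproduces the same qualitative behavior as the classical case, which is why the authors move on to function fields.
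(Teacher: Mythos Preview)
Your proposal is correct and takes essentially the same approach as the paper: verify the two conditions of \Cref{sec:conditions-for-generalization}, with the even symmetry coming from $\varepsilon_f = +1$ and the rapid decay of $\Phi(u,f)$ read off from its explicit theta-type series representation. The paper simply writes down the formula
\[
\Phi(u, f) \ = \ \left(\frac{2\pi}{\sqrt{N}}\right)^{(k-1)/2} \sum_{n \geq 1} \lambda_f(n)\, n^{(k-1)/2} \exp\!\left(-\frac{2\pi n}{\sqrt{N}} e^{u} + \frac{k}{2} u\right)
\]
and observes the decay directly; note that since the gamma factor here is $\Gamma(s+\tfrac{k-1}{2})$ rather than $\Gamma(s/2)$, the kernel decays like $\exp(-c\,n\,e^{u})$ rather than the $\exp(-c\,n\,e^{2u})$ you wrote, though this does not affect the double-exponential conclusion.
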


\begin{proof}
By definition of $H_k^+(N)$, the $L$-functions have even symmetry. Also, we have
\begin{equation}
\Xi(x, f)
\ = \
\int_0^\infty \Phi(u) (e^{iux} + e^{-iux}) \, du,
\end{equation}
where
\begin{equation}
\Phi(u, f)
\ = \
\left(
 \frac{2\pi}{\sqrt{N}}
\right)^{(k-1)/2}
\sum_{n \geq 1}
\lambda_f(n) n^{(k-1)/2}
\exp
\left(
 -\frac{2\pi n}{\sqrt{N}} e^{u} + \frac{k}{2} u
\right)
,
\end{equation}
which shows that $\Phi(u)$ decays rapidly as $u \to \infty$. Thus, both conditions described in \Cref{sec:conditions-for-generalization} are satisfied.
\end{proof}

\begin{conj}[The generalized Newman's conjecture for $H_k^+(N)$]
Let $f \in H_k^+(N)$. Then $\Lambda_f \geq 0$.
\end{conj}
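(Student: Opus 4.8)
The plan is to transport the Csordas--Smith--Varga (CSV) machinery to $H_k^+(N)$, along the lines by which the preceding lemma transports P\'olya's setup. The first step is to establish the analogue of \Cref{lem:double-zero-lower-bound} for $\Xi_t(\cdot, f)$: if $\Xi_{t_0}(x,f)$ has a zero of order at least $2$, then $t_0 \le \Lambda_f$. I expect this to go through with no new ideas, since $F(x,t) := \Xi_t(x,f)$ again satisfies the backwards heat equation $\partial_t F + \partial_{xx}F = 0$ --- one differentiates under the integral in $\Xi_t(x,f) = \int_0^\infty e^{tu^2}\Phi(u,f)(e^{iux}+e^{-iux})\,du$, which is legitimate because $\Phi(u,f)$ has the rapid decay shown in the lemma above --- and the CSV proof uses only this PDE together with the Hadamard factorization of $\Xi(\cdot,f)$ and the monotonicity of the count of real zeros under the flow (\Cref{lem:real-zeros-stay-real}). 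The same remark that lets Stopple's lower-bound techniques carry over to quadratic Dirichlet $L$-functions applies here.

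With that lemma in place, one clean case settles the conjecture immediately: if some $L(s,f)$ with $f\in H_k^+(N)$ has a multiple zero on the critical line, then $\Lambda_f \ge 0$. Otherwise I would run the quantitative side of CSV. Under the backwards heat flow the zeros $\{x_j(t)\}$ of $\Xi_t(\cdot,f)$ satisfy a coupled ODE system, and an ``unusually'' close pair of real zeros of $\Xi_0(\cdot,f)$ --- equivalently, following Stopple's variant, an unusually low-lying zero $\gamma_1$, so that $\pm\gamma_1$ is a close pair --- forces a bound of the shape $\Lambda_f \ge -C\,(\mathrm{gap})^2$, with $C$ explicit in the contribution of the remaining zeros (the quantity $G$ of the footnote). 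To push this to $\Lambda_f \ge 0$, the final step would have to supply, for the given $f$, a gap small enough to close the bound --- via either arithmetic input on the zeros of $L(s,f)$ or a numerical search, as in \cite{csv,saouter}.

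The hard part --- in fact the genuine obstacle --- is this last step, and it is the same one that keeps Newman's original conjecture open. For a fixed $f$ with simple zeros the CSV estimate yields only a strictly negative lower bound; improving it to $0$ would require either a provable repeated zero (not expected: the zeros of $L(s,f)$ are conjecturally simple, cf.\ \Cref{rem:simplicity-no-number-field-analogue}) or a new mechanism forcing the zeros of some $\Xi_0(\cdot,f)$ to cluster arbitrarily tightly. The weaker ``family'' statement $\sup_{f}\Lambda_f \ge 0$ looks more tractable: as $N$ or $k$ grows, the low-lying-zero statistics of $H_k^+(N)$ (even orthogonal symmetry, via one-level densities for holomorphic newforms) should produce forms whose first zero is arbitrarily low, hence $\Lambda_f$ arbitrarily close to $0$, by the same scheme the paper later carries out for function-field families. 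But the per-form bound exactly as worded lies beyond present techniques --- which is why this is recorded as a conjecture and the paper then turns to function fields, where RH is a theorem and the Newman constants can be computed outright.
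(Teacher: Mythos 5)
The statement you were handed is a \emph{conjecture}, not a theorem: the paper offers no proof of it, and indeed immediately remarks that ``while we are able to make a Newman's conjecture in the automorphic forms setting, we see only the same behavior as before'' and then pivots to function fields. You correctly recognize this. Your account of how one \emph{would} attack it --- transport of the backwards-heat-equation structure, the CSV double-zero lemma, and the Csordas--Smith--Varga / Stopple lower-bound machinery yielding $\Lambda_f \ge -C(\mathrm{gap})^2$ from a close pair of zeros --- matches the paper's framing exactly, and your diagnosis of the obstruction is also the paper's: for a fixed $f$ with only simple zeros the method produces a strictly negative bound, and nothing in sight forces the bound all the way to $0$. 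Your final observation, that the ``family'' form $\sup_f \Lambda_f \ge 0$ is the tractable version via low-lying-zero statistics, anticipates precisely what the authors go on to do in the function-field case. In short, there is no gap in your reasoning because there is no proof to supply; you have correctly identified the statement as open and explained why the available machinery stops short, which is consistent with the paper labeling it a conjecture.
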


In fact, most of the results in \cite{csv} and \cite{stopple} on lower bounds of $\Lambda_f$ carry over. However, while we are able to make a Newman's conjecture in the automorphic forms setting, we see only the same behavior as before. Thus in the next section we focus our attention on function field $L$-functions, where many new phenomena appear.

\section{Newman's conjecture for function fields}\label{sec:fn-fields}

\subsection{Background on function fields}

In the function fields setting, the appropriate analogue of $\bbZ$ is $\bbF_q[T]$, the coordinate ring of the affine line over $\bbF_q$. The background introduced here is given in more detail in \cite[Section 2]{rudnick} or \cite[Section 3]{andrade}. For a comprehensive text on number theory in function fields, see \cite{rosen}.

\begin{defn}
Let $q$ be an odd prime power and let $D \in \bbF_q[T]$. For this paper, we will say that $(q, D)$ is a \emph{good pair} if

\begin{itemize}
\item $D$ is square-free and monic,
\item $\deg D$ is odd,
\item $\deg D \geq 3$.
\end{itemize}
\end{defn}

For $(q, D)$ a good pair, let $\chi_D : \bbF_q[T] \to \{-1, 0, 1\}$ be the quadratic character modulo $D$. That is, $\chi_D(f) = ( \frac{D}{f} )$, where $(\frac{\,\cdot\,}{\,\cdot\,})$ is the Kronecker symbol.

\begin{rem}
We assume $q$ is odd because if a field has characteristic $2$, then every element is a perfect square. We assume $D$ is square-free and monic because this corresponds to the fundamental discriminants in the number field setting.

We assume $\deg D$ is odd only for simplicity and ease of exposition. The case when $\deg D$ is even can be handled similarly with some modifications.
\end{rem}

For $(q, D)$ a good pair, we define the $L$-function
\begin{equation}
L(s, \chi_D)\ :=\ \sum_{f \text{ monic}} \frac{\chi_D(f)}{|f|^s}
.
\end{equation}
By collecting terms, we can write
\begin{equation}\label{eq:l-fn-sum-over-q-s}
L(s, \chi_D)
\ = \
\sum_{n=0}^\infty
c_n
(q^{-s})^n,
\end{equation}
where
\begin{equation}\label{eq:l-fn-coefficients}
c_n
\ = \
\sum_{\substack{f \text{ monic} \\ \deg f = n}}
\chi_D(f)
.
\end{equation}

It can be shown that $c_n = 0$ for all $n \geq \deg D$, so $L(s, \chi_D)$ is a polynomial in $q^{-s}$ of at most degree $\deg D-1$. In fact, the degree is exactly $\deg D - 1$.

Let $g = \frac{1}{2}(\deg D - 1)$; we use the letter $g$ because the value of $g$ is the genus of the hyperelliptic curve $y^2 = D(T)$. The completed $L$-function $\xi(s, \chi_D) := q^{gs}L(s, \chi_D)$ satisfies the functional equation $\xi(s, \chi_D) = \xi(1-s, \chi_D)$. Note that this satisfies the symmetry type we need, as discussed in \Cref{sec:conditions-for-generalized}.

\begin{rem}\label{rem:RH-curves}
By the Riemann Hypothesis for curves over a finite field, proved by Andre Weil, we know that all the zeros of $L(s, \chi_D)$ lie on the critical line $\Re(s) = \frac{1}{2}$. (A detailed exposition of Bombieri's proof is given in \cite[Appendix]{rosen}.)
\end{rem}

Using the functional equation, we can write
\begin{equation}\label{eq:xi-function-definition}
\Xi(x, \chi_D)
\ :=\
\Lambda\left(\frac{1}{2} + i \frac{x}{\log q}, \chi_D\right)
\ = \
\Phi_0 + \sum_{n = 1}^g \Phi_n \cdot ( e^{inx} + e^{-inx} )
\end{equation}
for some constants
\begin{equation}\label{eq:fourier-coefficients}
\Phi_n \ = \  c_{g-n}q^{n/2} \ = \  c_{g+n} q^{-n/2}
;
\end{equation}
the two equivalent expressions for $\Phi_n$ are due to the symmetry of the completed $L$-function.

\subsection{Introducing the $t$ parameter}

We observe that the right side of \eqref{eq:xi-function-definition} gives the Fourier series of our completed $L$-function. We can introduce a new parameter as in \eqref{eq:xi-t-chi-D-for-number-field}, and find
\begin{equation}
\Xi_t(x, \chi_D)
\ :=\
\Phi_0 + \sum_{n = 1}^g \Phi_n e^{t n^2} ( e^{inx} + e^{-inx} ).
\end{equation}

\begin{rem}
What we call $\Phi_n$ here is the analogue of $\Phi(u)$ defined in the number field setting. In both cases, $\Phi$ is the Fourier transform of $\Xi$. The difference is that in the number field setting, $\Xi(x)$ is a function on $\bbR$ with rapid decay as $|x| \to \infty$, whereas here in the function field $\Xi(x)$ is now defined on the circle ($x \in [0, 2\pi]$). This is the reason that $\Phi$ is now a $\bbZ \to \bbR$ function.
\end{rem}

In order to guarantee the existence of a De Bruijn--Newman constant $\Lambda_D$, we need the following analogue of \Cref{lem:real-zeros-stay-real}.

\begin{lem}\label{lem:real-zeros-stay-real-fnfield}
Suppose for some $t$ that $\Xi_t(x, \chi_D)$ has only real zeros. Then for all $t' > t$, $\Xi_{t'}$ has only real zeros.
\end{lem}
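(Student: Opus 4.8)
The plan is to mimic, in the Fourier-series setting on the circle, the argument De Bruijn used on the line for \Cref{lem:real-zeros-stay-real}. The key object is the function $F(x,t) = \Xi_t(x,\chi_D)$, which by construction satisfies the backwards heat equation $\partial_t F + \partial_{xx} F = 0$; equivalently, running $t$ \emph{forward} corresponds to solving the ordinary (forward) heat equation, and we want to show that the heat flow takes a trigonometric polynomial with only real zeros to another such polynomial with only real zeros. First I would package $\Xi_t(x,\chi_D)$ as a polynomial: writing $z = e^{ix}$, we have $\Xi_t(x,\chi_D) = z^{-g} P_t(z)$ where $P_t(z) = \Phi_0 z^g + \sum_{n=1}^g \Phi_n e^{tn^2}(z^{g+n} + z^{g-n})$ is a (self-inversive) polynomial of degree $2g$ in $z$. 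Real zeros $x$ of $\Xi_t$ correspond exactly to zeros of $P_t$ on the unit circle $|z|=1$. So the claim becomes: if $P_t$ has all $2g$ of its zeros on the unit circle, then so does $P_{t'}$ for every $t' > t$.

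The main step is a Laguerre–P\'olya / Hermite–Biehler type multiplier argument adapted to the circle. The operator taking $P_t$ to $P_{t'}$ multiplies the coefficient of $z^{g+n}$ (and of $z^{g-n}$) by $e^{(t'-t)n^2}$. I would factor this as a limit of compositions of elementary operators: it suffices to show that for small $\delta > 0$ the operator $M_\delta$ sending $\sum_k a_k z^k \mapsto \sum_k a_k e^{\delta (k-g)^2} z^k$ preserves the property ``all zeros on the unit circle,'' and then iterate/take limits (using Hurwitz's theorem to pass to the limit, and the fact that the coefficients $\Phi_n$ are fixed so no zeros escape to $0$ or $\infty$). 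For a single such operator, I would further decompose: up to the harmless overall monomial factor $z^{-g}$, applying $e^{\delta(k-g)^2}$ is the same as the heat semigroup acting on $\sum_k a_k e^{i k x}$, i.e. convolution on the circle with the (periodized) Gauss–Weierstrass kernel $\Theta_\delta(x) = \sum_{m} e^{-(x+2\pi m)^2/(4\delta)}$, a \emph{strictly totally positive / P\'olya frequency} kernel. The classical fact (going back to P\'olya–Schur and used by de Bruijn) is that convolution with such a kernel cannot \emph{decrease} the number of sign changes / cannot push zeros off the circle: variation-diminishing kernels preserve the class of real-rooted (here, circle-rooted) trigonometric polynomials. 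Concretely, $\Xi_t(\cdot,\chi_D)$ is real-valued on $\bbR$ and has exactly $2g$ real zeros in a period iff it changes sign $2g$ times; convolving with a positive kernel of unit mass can only merge sign changes, and since the degree (hence the maximal possible number of real zeros, counted with multiplicity) stays $2g$, no sign changes can be lost — so all $2g$ zeros remain real.

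The step I expect to be the genuine obstacle is making the ``variation-diminishing'' / total-positivity argument rigorous in the presence of possible \emph{multiple} real zeros of $\Xi_t$, where counting sign changes is not by itself enough. Here I would instead argue directly with the heat equation and a maximum-principle / zero-counting argument in the complex $x$-variable: suppose $\Xi_{t'}$ had a non-real zero for some $t' > t$; by continuity in $t$ and the fact (from the even functional equation) that non-real zeros occur in conjugate pairs, there is a least such $t'$, at which a pair of zeros first leaves the real line, forcing $\Xi_{t'}$ to have a real zero of order $\ge 2$ (a double zero ``splitting off''). Then I invoke \Cref{lem:double-zero-lower-bound}'s analogue — i.e., analyze the local behavior via $\partial_t \Xi = -\partial_{xx}\Xi$: at a real double zero $x_0$ of $\Xi_{t'}$ we have $\Xi_{t'}(x_0) = \partial_x \Xi_{t'}(x_0) = 0$, and the heat equation relates $\partial_t \Xi_{t'}(x_0) = -\partial_{xx}\Xi_{t'}(x_0) \neq 0$, so the double zero is created rather than destroyed as $t$ decreases through $t'$, contradicting the minimality of $t'$ (the zeros were real just below $t'$). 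The care needed is to rule out higher-order degeneracies and to justify the continuity/analytic-dependence of the zeros on $t$ (standard, since $\Xi_t$ is a trigonometric polynomial of fixed degree $2g$ with coefficients entire in $t$ and nonvanishing leading coefficient $\Phi_g$, so Hurwitz applies uniformly on the circle $|z|=1$). Once that local obstruction argument is in place, the lemma follows.
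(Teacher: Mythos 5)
The paper's own proof is much shorter and imports the real work: it sets $F(n)=\Phi_{|n|}$ for $|n|\le g$ and $F(n)=0$ otherwise, then applies \Cref{lem:de-bruijn-analogue}, a Fourier-series version of De Bruijn's Theorem~13, which is proved there only by a pointer (replace the trigonometric integral in De Bruijn's (3.6) by the trigonometric sum and check Theorems~11--13 go through). Note that \Cref{lem:de-bruijn-analogue} gives the \emph{quantitative} strip contraction $|\Im z|\le\Delta\Rightarrow|\Im z|\le\max(\Delta^2-2t,0)^{1/2}$, strictly stronger than the statement of \Cref{lem:real-zeros-stay-real-fnfield}, and this stronger form is used in the remark that follows it. Your argument is aimed only at the qualitative ``real zeros stay real,'' so you give up that extra information.

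Of your two routes, the first has a genuine gap at exactly the step you flag. Convolution with the periodized Gaussian is variation-diminishing, which means it cannot \emph{increase} the number of sign changes; it gives you no lower bound. The sentence ``since the degree stays $2g$, no sign changes can be lost'' does not follow: a self-inversive polynomial of degree $2g$ may perfectly well have $2g-2k$ zeros on the unit circle and $2k$ paired off it, and total positivity alone does not forbid this. What De Bruijn actually uses is not the variation-diminishing property of the Gaussian kernel but a P\'olya--Schur/Laguerre-type factorization of the multiplier $e^{tn^2}$ into elementary real-rootedness-preserving operators; that is precisely what his Theorems~11--13 (and hence \Cref{lem:de-bruijn-analogue}) package. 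Your instinct that one should reduce to elementary multipliers and pass to a limit via Hurwitz is correct, but the elementary step cannot be justified by ``variation-diminishing.''

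Your second route --- continuity of the zeros, a least coalescence time $t'$, and the local expansion $\Xi_s(x)\approx\tfrac{a}{2}\bigl[(x-x_0)^2-2(s-t')\bigr]$ coming from the backwards heat equation --- is sound in outline and is essentially the mechanism behind \Cref{lem:double-zero-lower-bound-fnfield} (and Stopple's argument), so it would give a correct self-contained alternative proof. But the two things you explicitly wave at are the real content: (i) the Hurwitz/compactness argument that produces a minimal $t'\ge t$ with $\Xi_{t'}$ still real-rooted while a real zero of order $\ge 2$ has just formed (one must take an infimum, argue along a subsequence, and handle the edge case $t'=t$, since the offending set need not be an interval); and (ii) the case of a real zero of order $2m>2$, where the quadratic model fails and one must instead compare against heat polynomials (Hermite polynomials) to see that all $2m$ local zeros leave $\bbR$ as $s$ decreases below $t'$. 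Neither is hard, but both require writing out; they are exactly what one imports for free from De Bruijn in the paper's proof.
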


\Cref{lem:real-zeros-stay-real-fnfield} immediately follows from the following lemma.

\begin{lem}[Analogue of Theorem 13 in \cite{debruijn}]\label{lem:de-bruijn-analogue}
Suppose $F : \bbZ \to \bbC$ satisfies $\sum |F(n)| < \infty$, $F(n) = \overline{F(-n)}$ and $F(n) = O(e^{-|u|^{2+\epsilon}})$ for some $\epsilon > 0$. Also suppose that the roots of $\sum_{n=-\infty}^\infty F(n) e^{inx}$ satisfy $|\Im z| \leq \Delta$ for some $\Delta \geq 0$. Then all the roots of $\sum_{n=-\infty}^\infty F(n) e^{tn^2} e^{inx}$ lie in the strip $| \Im z | \leq \max(\Delta^2 - 2t, 0)^{1/2}$.
\end{lem}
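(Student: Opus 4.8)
The plan is to mimic de Bruijn's original argument (Theorem 13 in \cite{debruijn}), transferring it from the Fourier-integral setting to the Fourier-series setting. The key structural fact is that the heat-type operator $e^{t\partial_{xx}}$ — here acting as multiplication of the $n$-th Fourier coefficient by $e^{tn^2}$ — can be realized as a convolution with a Gaussian, and that convolving with a Gaussian (equivalently, running the backward heat equation for negative time, or the forward heat equation) cannot push the imaginary parts of the zeros outside a controlled strip. Concretely, I would write $G(x) := \sum_n F(n) e^{inx}$, which by the decay hypothesis $F(n) = O(e^{-|n|^{2+\epsilon}})$ extends to an entire function of $x$, and then express
\begin{equation}
\sum_{n=-\infty}^\infty F(n) e^{tn^2} e^{inx}
\ = \
\int_{-\infty}^\infty G(x + w)\, k_t(w)\, dw,
\end{equation}
where $k_t$ is the (suitably normalized) Gaussian-type kernel whose Fourier transform reproduces the factor $e^{tn^2}$. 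For $t < 0$ this is a genuine Gaussian of positive variance $-2t$, so the identity is an honest convolution; for $t > 0$ one either analytically continues in $t$ or observes $\max(\Delta^2 - 2t, 0) = 0$ forces a direct argument (the $t > 0$ case should also follow from the semigroup property once the $t \le 0$ case, in the form stated with the $\max$, is known, or by noting that if all roots are already real the conclusion is trivial).

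The heart of the matter is then a Laguerre–Pólya / Hermite–Biehler style lemma: if an entire function $G$ of the appropriate growth class has all its zeros in the strip $|\Im z| \le \Delta$, and $\mu$ is a finite positive measure such that $\int e^{iwz}\,d\mu(w)$ has all zeros in $|\Im z| \le \delta$ (for a Gaussian $d\mu = k_t\,dw$ with $t<0$ one has $\delta = 0$ since $\int e^{iwz} k_t(w)\,dw = c\, e^{-z^2/(4t)} \cdot(\text{const})$, wait — more precisely the relevant classical fact is that $e^{t n^2}$ is the sequence of "multiplier" values of a Gaussian, and Gaussians are Pólya frequency functions), then $\int G(z+w)\,d\mu(w)$ has all its zeros in $|\Im z| \le (\Delta^2 - 2t)^{1/2}$. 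In de Bruijn's treatment this is obtained by approximating $e^{tn^2}$ by finite products $\prod (1 - n^2/a_j^2)$-type factors, each of which corresponds to convolution with a function whose zeros lie on the real axis, and tracking how each elementary convolution moves the strip: convolving with $\cosh$-type kernels (the building blocks) shifts $\Delta$ according to an explicit recursion that telescopes to the $\sqrt{\Delta^2 - 2t}$ bound. I would reproduce this approximation scheme, being careful that the periodicity of $G$ in $x$ (it lives on the circle) does not cause convergence problems — the rapid decay of $F(n)$ ensures all the sums and integrals converge absolutely and uniformly on compact sets, so Hurwitz's theorem lets us pass the zero-location property to the limit.

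The main obstacle I expect is precisely this limiting/approximation step: verifying that the finite-product approximants to the multiplier sequence $\{e^{tn^2}\}$ converge in a way strong enough that (i) the associated convolution operators converge to the Gaussian convolution uniformly on compacta, and (ii) no zeros "escape to infinity" in the limit, so that Hurwitz's theorem genuinely applies and the strip bound is preserved rather than merely its closure. A secondary technical point is bookkeeping the exact constant: one must check that the elementary recursion $\Delta \mapsto \sqrt{\Delta^2 + (\text{increment})}$ produced by each factor sums to exactly $-2t$ (for $t<0$), which is a direct computation with the Taylor coefficients of $e^{tn^2}$ but must be done with care about signs. Everything else — the entire-function growth estimates, the absolute convergence, the reduction of \Cref{lem:real-zeros-stay-real-fnfield} to this lemma by taking $\Delta = 0$ and noting $\max(0 - 2t, 0) = 0$ for $t > 0$ — is routine. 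I would present the argument by first stating the convolution identity as a sublemma, then the Gaussian-as-limit-of-Pólya-frequency-functions fact, and finally assembling them, referring to \cite{debruijn} for the parts of the recursion that are verbatim identical.
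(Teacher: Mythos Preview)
Your approach is essentially the same as the paper's: the paper's proof simply observes that one replaces de Bruijn's trigonometric integral $f(z) = \int_{-\infty}^\infty F(t) e^{izt}\,dt$ by the trigonometric sum $f(z) = \sum_{n} F(n) e^{inx}$ and that the arguments of Theorems 11--13 of \cite{debruijn} carry over verbatim. You have in fact supplied considerably more detail about the mechanics of that transfer (Gaussian convolution, approximation by finite products, Hurwitz) than the paper's own one-sentence sketch.
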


\begin{proof}
The key idea is to take (3.6) in De Bruijn's paper, which is the trigonometric integral $f(z)$ $=$ $\int_{-\infty}^\infty F(t) e^{izt} dt$, and replace it with the trigonometric sum $f(z)$ $=$ $\sum_{n=-\infty}^\infty F(n) e^{inx}$. Then we note that the arguments to Theorems 11, 12, and 13 in De Bruijn's paper can be generalized to this situation.
\end{proof}

\begin{proof}[Proof of \Cref{lem:real-zeros-stay-real-fnfield}]
Let
\begin{equation}
F(n)
\ = \
\begin{cases}
\Phi_{|n|} &\text{if } |n| \leq g
\\
0 &\text{if } |n| > g
,
\end{cases}
\end{equation}
and apply \Cref{lem:de-bruijn-analogue}.
\end{proof}

\begin{rem}
\Cref{lem:real-zeros-stay-real-fnfield} can be phrased as ``zeros on the real line remain on the real line.'' \Cref{lem:de-bruijn-analogue} gives us more than that. It also tells us that the zeros off the real line move towards the line, and furthermore provides a lower bound on the speed at which the zeros move.

For instance, if we know all the zeros of $\Xi_{t_0}(x, \chi_D)$ lie in the strip $|\Im x| \leq \Delta$, then we know all the zeros are real by the time $t = t_0 + \frac{1}{2}\Delta^2$. In the number field case, since we know the zeros of $\Xi(x)$ (for the Riemann zeta function) lie in the critical strip $|\Im(x)| \leq \frac{1}{2}$, we know that $\Lambda \leq \frac{1}{2}$.
\end{rem}

By \Cref{lem:real-zeros-stay-real-fnfield} and RH (see \Cref{rem:RH-curves}), we know that for each good pair $(q, D)$, there exists a constant $\Lambda_D \in [-\infty, 0]$ such that

\begin{enumerate}
\item if $t \geq \Lambda$ then $\Xi_t(\cdotwild, \chi_D)$ has only real zeros, and
\item if $t < \Lambda$ then $\Xi_t(\cdotwild, \chi_D)$ has a non-real zero.
\end{enumerate}

Note that we have not eliminated the possibility of $\Lambda_D = -\infty$. However, it turns out that the analogue of \Cref{lem:polya-false} is false in the function field setting; that is, there are $L$-functions with the property that $\Xi_t( \cdotwild, \chi_D)$ has only real zeros for \emph{all} $t$. (\Cref{rem:example-with-minus-infinity} contains an example.)

There is a partial analogue, which holds for irreducible $D$. This at least assures us that $\Lambda_D \neq -\infty$ often.

\begin{lem}\label{lem:polya-false-irreducibles}
Let $(q, D)$ be a good pair and suppose $D$ is irreducible. Then there exists some $t \in \bbR$ such that $\Xi_t$ has a non-real zero.
\end{lem}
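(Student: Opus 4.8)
The plan is to prove the slightly stronger statement that for irreducible $D$ the function $\Xi_t(\cdot,\chi_D)$ has a non-real zero for \emph{every} sufficiently negative $t$ --- indeed that all of its zeros escape to $i\infty$ as $t\to-\infty$. The starting point is to make the substitution $z=e^{ix}$, under which $\Xi_t(x,\chi_D)=z^{-g}P_t(z)$ with $P_t(z):=\sum_{m=-g}^{g}\Phi_{|m|}e^{tm^2}z^{g+m}$. Since the unique monic polynomial of degree $0$ is $1$, we have $c_0=\chi_D(1)=1$, hence $\Phi_g=c_0q^{g/2}=q^{g/2}\neq 0$; thus $P_t$ has degree exactly $2g$ (recall $g\ge 1$ for a good pair) and $P_t(0)=\Phi_g e^{tg^2}\neq 0$. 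Because $x\mapsto e^{ix}$ is a biholomorphism from the strip $-\pi\le\Re x<\pi$ onto $\bbC\setminus\{0\}$ and $z^{-g}$ is nonvanishing, it follows that for every $t\in\bbR$ the function $\Xi_t(\cdot,\chi_D)$ has exactly $2g\ge 2$ zeros, counted with multiplicity, in this strip.

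The crux of the argument, and the only place irreducibility is needed, is the non-vanishing of the middle coefficient: $\Phi_0\neq 0$. By \eqref{eq:fourier-coefficients} we have $\Phi_0=c_g=\sum_{f\text{ monic},\,\deg f=g}\chi_D(f)$. Since $D$ is irreducible of degree $2g+1>g$, no monic polynomial $f$ of degree $g$ is divisible by $D$, so $\gcd(f,D)=1$ and $\chi_D(f)=\pm 1$ for \emph{every} such $f$. Hence $\Phi_0$ is a sum of exactly $q^g$ terms, each equal to $\pm 1$; as $q$ is an odd prime power, $q^g$ is odd, so $\Phi_0$ is an odd integer and in particular $\Phi_0\neq 0$. (This is exactly the step that fails for reducible $D$: if some prime $P\mid D$ has degree $\le g$ then $\chi_D(P)=0$, the number of nonzero summands can be even, and $c_g$ may vanish; this is the mechanism behind the examples with $\Lambda_D=-\infty$.)

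Finally I would send $t\to-\infty$. Writing $\Xi_t(x,\chi_D)=\Phi_0+\sum_{n=1}^{g}2\Phi_n e^{tn^2}\cos(nx)$, on the compact set $K_M:=\{|\Re x|\le\pi,\ |\Im x|\le M\}$ each of the finitely many correction terms is bounded by $2|\Phi_n|e^{tn^2}e^{nM}$, which tends to $0$ as $t\to-\infty$; hence $\Xi_t(\cdot,\chi_D)\to\Phi_0$ uniformly on $K_M$. Since $\Phi_0\neq 0$, for all sufficiently negative $t$ the function $\Xi_t(\cdot,\chi_D)$ is zero-free on $K_M$. Combining this with the zero count from the first paragraph, for such $t$ all $2g\ge 1$ zeros of $\Xi_t(\cdot,\chi_D)$ in the strip $-\pi\le\Re x<\pi$ must have $|\Im x|>M$; in particular $\Xi_t(\cdot,\chi_D)$ has a non-real zero, which is the assertion of the lemma.

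The main obstacle is really just the identification of what forces a non-real zero, namely $\Phi_0=c_g\neq 0$; once one observes that irreducibility makes $c_g$ an odd integer (because every value $\chi_D(f)$ with $\deg f=g$ is a unit), the remainder is a soft continuity-and-counting argument. Equivalently, the last two paragraphs can be packaged as a Newton-polygon computation for $P_t$ over the field of Puiseux series in $\varepsilon=e^t$: the coefficient valuations $m^2$ ($-g\le m\le g$) are strictly convex in $m$, so when $\Phi_0\neq 0$ the polygon has $2g$ edges of nonzero slope and all $2g$ root valuations are nonzero, forcing every zero of $\Xi_t$ off the real axis as $t\to-\infty$; this reformulation makes transparent why $\Phi_0\neq 0$ is precisely the condition needed.
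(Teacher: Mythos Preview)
Your proof is correct and follows the same overall strategy as the paper: establish $\Phi_0\neq 0$ via the parity argument (irreducibility forces every $\chi_D(f)=\pm 1$ for $\deg f<\deg D$, and there are $q^{g}$ such $f$, an odd number), then send $t\to-\infty$ to force zeros off the real axis.

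The packaging of the final step differs slightly. The paper actually shows that \emph{all} the $\Phi_n$ are nonzero (same parity argument applied to each $c_{g-n}$) and then runs a dominant-balance heuristic using $\Phi_0$ and $\Phi_1$: at a zero one has $|\Phi_0/\Phi_1|\,e^{-t}\approx|e^{ix}+e^{-ix}|$, and as $t\to-\infty$ the left side exceeds $2$, forcing $x\notin\bbR$. Your route is cleaner and more self-contained: you only need $\Phi_0\neq 0$ (together with the automatic $\Phi_g\neq 0$), and you replace the asymptotic balance by an exact zero count for the degree-$2g$ polynomial $P_t$ plus uniform convergence $\Xi_t\to\Phi_0$ on horizontal strips. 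This avoids any informality about ``main terms'' and makes transparent that all $2g$ zeros escape to $|\Im x|=\infty$. The paper's version, on the other hand, immediately yields the remark following the lemma (that any two nonzero Fourier coefficients already suffice), which your argument as written does not directly give since it is tied to the specific pair $\Phi_0,\Phi_g$.
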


\begin{proof}
First we show that $\Phi_n \neq 0$ for all $0 \leq n \leq g$. Using \eqref{eq:l-fn-coefficients} and \eqref{eq:fourier-coefficients}, we have
\begin{equation}
\frac{\Phi_n}{q^{n/2}}
\ = \
c_{g-n}
\ = \
\sum_{\substack{f \text{ monic} \\ \deg f \ = \  g-n}}
\chi_D(f)
.
\end{equation}
Since $q$ is odd, the number of terms in the sum is odd. Every $f$ in the sum is relatively prime to $D$, since $g-n < 2g+1 = \deg D$. Hence, every term in the sum is either $+1$ or $-1$. Thus $c_{g-n}$ is odd, so $\Phi_n \neq 0$.

Using the fact that $\Phi_n \neq n$, we can complete the proof of the lemma. For very negative $t$ (i.e., as $t \to -\infty$), the main terms of $\Xi_t(x, \chi_D)$ are $\Phi_0 + \Phi_1 e^{t} (e^{ix} + e^{-ix})$. If $x$ is a zero, we have
\begin{equation}
|\Phi_0/\Phi_1|e^{|t|} \approx |e^{ix} + e^{-ix}|.
\end{equation}
As $t \to -\infty$, the left side goes to $\infty$ (since $\Phi_0 \neq 0$), so for some $t$, the left side exceeds $2$, which means $x$ cannot be real.
\end{proof}

\begin{rem}
We can see from the proof of \Cref{lem:polya-false-irreducibles} that the conclusion of the lemma holds if at least two of the Fourier coefficients of $\Xi_t(x, \chi_D)$ are nonzero.
\end{rem}

\begin{rem}\label{rem:example-with-minus-infinity}
An example of an $L$-function with $\Lambda_D = -\infty$ is $D = T^3 + T \in \bbF_3[T]$. For this polynomial, $\Xi_t(x, \chi_D) = \sqrt{3} e^t \cos x$. As expected, $D$ is not irreducible -- we have $D = T(T^2+1)$.
\end{rem}





\subsection{The failure of Newman's conjecture for individual $L$-functions}

Using the results of the previous section, we know that for each $L$-function $L(s, \chi_D)$, there is a De Bruijn--Newman constant $\Lambda_D$.

At first, the ``obvious'' generalization of Newman's conjecture to this setting is that $\Lambda_D \geq 0$ for each $D$. However, this is false. \Cref{rem:example-with-minus-infinity} provides an example with $\Lambda_D = -\infty$. \Cref{sec:backwards-heat-eq-example} provides an example with $-\infty < \Lambda_D < 0$. In fact, for most (if not all) $D$, $\Lambda_D$ will be strictly negative.

\begin{lem}\label{lem:simple-implies-lambda-negative}
Let $(q, D)$ be a good pair. If $\Xi_0(x, \chi_D)$ has only simple zeros, then $\Lambda_D < 0$.
\end{lem}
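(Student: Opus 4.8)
The plan is to show that if $\Xi_0(x,\chi_D)$ has only simple zeros, then we can strictly decrease $t$ below $0$ while keeping all zeros real, which forces $\Lambda_D < 0$. The mechanism is the backwards heat equation: setting $F(x,t) = \Xi_t(x,\chi_D)$, we have $\partial_t F + \partial_{xx} F = 0$, and $\Xi_t$ is, for each $t$, a real-analytic (indeed trigonometric-polynomial) function of the real variable $x$, $2\pi$-periodic. Since $\Xi_0$ has only simple real zeros on the circle $\mathbb R/2\pi\mathbb Z$, there are finitely many of them, say $x_1(0) < \dots < x_m(0)$ in a fundamental domain, and at each one $\partial_x \Xi_0(x_j(0),\chi_D) \neq 0$. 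By the implicit function theorem, for $t$ in a neighborhood of $0$ each zero persists as a smooth curve $x_j(t)$, and these curves account for all zeros of $\Xi_t$ near the real line (a compactness/degree argument on the circle shows no new zeros can appear from infinity or collide, for $|t|$ small). The key point is that the zeros $x_j(t)$ stay \emph{real} for $t$ slightly negative: differentiating $\Xi_t(x_j(t),\chi_D) = 0$ and using $\partial_t \Xi_t = -\partial_{xx}\Xi_t$ gives $x_j'(t) = \partial_{xx}\Xi_t / \partial_x\Xi_t$ evaluated at $(x_j(t),t)$, which is real whenever $x_j(t)$ is real and the zero is simple. So the real zeros of $\Xi_0$ deform to real zeros of $\Xi_t$ for all $t$ in a two-sided neighborhood $(-\delta,\delta)$.

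From this it follows that $\Xi_t(\cdotwild,\chi_D)$ has only real zeros for all $t \in (-\delta, \delta)$. Indeed, $\Xi_t$ is (up to the normalization and factor $e^{tn^2}$) the restriction to the real line of an entire function of $x$ of the form $\Phi_0 + \sum_{n=1}^g \Phi_n e^{tn^2}(e^{inx} + e^{-inx})$; equivalently, after substituting $z = e^{ix}$, a Laurent polynomial in $z$ of degree range $[-g,g]$, whose $2g$ roots are all on $|z|=1$ precisely when $\Xi_t$ has $2g$ real zeros per period. Counting: $\Xi_0$ has exactly $2g$ real zeros per period (each of the $g$ zeros $\pm\gamma_j$ of $L$ on the critical line, all simple by hypothesis), and by the implicit-function-theorem deformation these persist and remain real and distinct for $|t| < \delta$, so $\Xi_t$ has its full complement of $2g$ real zeros and hence \emph{only} real zeros. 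Therefore, by the definition of $\Lambda_D$ (real zeros for all $t \ge \Lambda_D$, a non-real zero for $t < \Lambda_D$), we must have $\Lambda_D \le -\delta < 0$.

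I would present the argument in this order: (i) record that $\Xi_t$ is a trig polynomial in $x$ and that $\Xi_0$ has exactly $2g$ simple real zeros per period by the simplicity hypothesis together with Weil's RH for curves (\Cref{rem:RH-curves}); (ii) apply the implicit function theorem at $t=0$ to continue each zero to a smooth curve $x_j(t)$ for $|t| < \delta$; (iii) via the backwards heat equation compute $x_j'(t) \in \mathbb R$ along real zeros, so the curves stay real; (iv) a counting/compactness argument to conclude $\Xi_t$ has \emph{only} real zeros for $|t| < \delta$; (v) conclude $\Lambda_D \le -\delta < 0$.

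The main obstacle is step (iv): ruling out that a pair of non-real zeros could be present for $t$ slightly negative even though the $2g$ real zeros persist. This is handled because the total number of zeros per period is fixed at $2g$ — the substitution $z = e^{ix}$ turns $\Xi_t$ into a Laurent polynomial with $z^{\pm g}$ coefficient $\Phi_g e^{tg^2} \neq 0$ (we may assume $\Phi_g \neq 0$, as otherwise $\Xi_t$ has lower degree and the argument applies with a smaller $g$; and if $\Phi_n = 0$ for some intermediate $n$ the root count is unaffected) — so once $2g$ real zeros are accounted for, there is no room for any others. One should also check the zeros $x_j(t)$ remain \emph{distinct} for small $|t|$, which again follows from the implicit function theorem since they are distinct and simple at $t = 0$.
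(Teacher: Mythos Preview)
Your proposal is correct and follows essentially the same argument as the paper: apply the implicit function theorem at each of the finitely many simple real zeros of $\Xi_0$ to continue them as real curves for $t$ in a small interval $(-\delta,0]$, then use the fixed zero count of $2g$ per period (from $\Phi_g \neq 0$) to conclude these account for all zeros, forcing $\Lambda_D \le -\delta < 0$. The paper obtains reality of the continued zeros directly from the real implicit function theorem rather than via the ODE coming from the backwards heat equation, and it asserts $\Phi_g \neq 0$ outright (which holds for every good pair, since $L(s,\chi_D)$ has degree exactly $\deg D - 1$ in $q^{-s}$) rather than hedging as you do.
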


\begin{proof}
The two key ideas of this technical argument are to use the implicit function theorem, and to note that there are only finitely many zeros (which is very different than the number field cases). Write $F(x, t) = \Xi_t(x)$. Suppose $\gamma$ is a simple zero of $\Xi_0(x, \chi_D)$. By the implicit function theorem, we can find a time interval $(-\epsilon, 0]$ and a function $y : (-\epsilon, 0] \to \bbR$ defined on this time interval such that $y(0) = \gamma$ and $F(y(s), s) = 0$ for all $s \in (-\epsilon, 0]$.

Because $\Phi_g \neq 0$, we know that $\Xi_0(x, \chi_D)$ has exactly $2g$ zeros (with multiplicity) in a period $0 \leq \Re(x) < 2 \pi$. Suppose all these zeros are simple, so we can write them as $0 < \gamma_1 < \gamma_2 < \cdots < \gamma_{2g} < 2\pi$. (We know the zeros of $\Xi_0$ are real because of \Cref{rem:RH-curves}.)

For each zero, there is a time interval $(-\epsilon_n, 0]$ such that the zero $\gamma_n$ stays real in this interval. Then all the zeros stay real inside the time interval $(-\epsilon, 0]$, where $\epsilon = \min\{\epsilon_1, \ldots, \epsilon_{2g}\}$. Finally, since $\Xi_t(x, \chi_D)$ has exactly $2g$ zeros in $0 \leq \Re(x) < 2\pi$ for every $t$, we have accounted for all of them.
\end{proof}

\begin{rem}
It is not known whether there exists a good pair $(q, D)$ such that $\Xi_0(x, \chi_D)$ has a double zero.
\end{rem}

\begin{rem}\label{rem:simplicity-no-number-field-analogue}
There is no analogue of \Cref{lem:simple-implies-lambda-negative} in the number field setting. A crucial part of the argument is the periodicity of $\Xi_t(x, \chi_D)$. Thus, $\epsilon$ is the minimum of a finite set of positive numbers (as opposed to the infimum of an infinite set), which allows us to conclude that $\epsilon$ is strictly positive.
\end{rem}

\subsection{Newman's conjecture for families of $L$-functions}\label{subsec:newman-families}

Because of \Cref{lem:simple-implies-lambda-negative}, we do not look at individual $L$-functions. Instead, following Stopple, we study families of $L$-functions.

\begin{conj}[Newman's conjecture, fixed $q$]\label{conj:newman-fixed-q}
Keep $q$, the number of elements of the finite field, fixed. Then
\begin{equation}
\sup_{(q, D) {\rm\ good\ pair}}
\Lambda_D
\geq 0
.
\end{equation}
\end{conj}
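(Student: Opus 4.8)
The plan is to combine the explicit formula $\Lambda_{D_p} = \log\frac{|a_p(\mathcal D)|}{2\sqrt p}$ for degree-$3$ polynomials with an equidistribution input that forces $|a_p(\mathcal D)|$ to be arbitrarily close to its maximal value $2\sqrt p$. Since this conjecture fixes $q$ (rather than letting $q$ vary), I would first establish the degree-$3$ formula in the form stated in the theorem above, relating $\Lambda_D$ to the single Fourier coefficient $\Phi_1$: for $\deg D = 3$ we have $g = 1$, so $\Xi_t(x,\chi_D) = \Phi_0 + 2\Phi_1 e^{t}\cos x$, whose zeros are real exactly when $|\Phi_0| \le 2|\Phi_1|e^{t}$, giving $\Lambda_D = \log\frac{|\Phi_0|}{2|\Phi_1|}$. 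Identifying $\Phi_0 = c_1$ (the sum of $\chi_D(T-\alpha)$ over $\alpha \in \bbF_q$) with $-a_q(\mathcal D)$ via point-counting on $y^2 = D(T)$, and $\Phi_1 = c_0\, q^{1/2} = q^{1/2}$, yields $\Lambda_D = \log\frac{|a_q(\mathcal D)|}{2\sqrt q}$ for $D \in \bbF_q[T]$ of degree $3$.

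Next I would produce, for a fixed $q$, a sequence of good pairs $(q, D_n)$ with $\Lambda_{D_n} \to 0$, i.e.\ with $|a_q(\text{curve}_n)|/(2\sqrt q) \to 1$. The cleanest route is to \emph{increase the degree}: rather than restrict to $\deg D = 3$, allow $\deg D$ to grow and engineer $D$ so that $\Xi_0(x,\chi_D)$ has an unusually close pair of zeros, then invoke \Cref{lem:double-zero-lower-bound}'s function-field analogue (or the de Bruijn estimate in \Cref{lem:de-bruijn-analogue} run backwards) to get $\Lambda_D$ close to $0$ from below. Concretely, by Katz--Sarnak / Deligne equidistribution, as $D$ ranges over monic square-free polynomials of degree $2g+1$ in $\bbF_q[T]$, the unitarized zeros (eigenangles of the Frobenius conjugacy class in $USp(2g)$) become equidistributed with respect to Haar measure as $g \to \infty$; Haar-random symplectic matrices have, with positive probability, two eigenangles within $\delta$ of each other for any $\delta > 0$. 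Translating a pair of eigenangles within $\delta$ into a pair of zeros $\gamma_j, \gamma_k$ of $\Xi_0(x,\chi_D)$ within $O(\delta)$, and feeding this into the heat-flow bound, gives $\Lambda_D \ge -C\delta^2$ for such $D$. Taking $\delta \to 0$ and the corresponding $g \to \infty$ produces pairs with $\Lambda_D \to 0^-$, and since RH for curves gives $\Lambda_D \le 0$ always, the supremum is exactly $0$.

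The main obstacle is making the equidistribution argument \emph{effective enough to locate close zeros}, and then converting "two zeros of $\Xi_0$ are close" into a quantitative lower bound on $\Lambda_D$. The first part requires more than the limiting $n$-level density statement: one needs that for each fixed small $\delta$, a positive proportion of $D$ of large degree has two zeros within $\delta$, which follows from Deligne equidistribution of Frobenius in $USp(2g)$ provided the family is genuinely "big" in the sense of having $USp(2g)$ as its geometric monodromy group (this is exactly the Katz--Sarnak computation for the hyperelliptic family). The second part — the analogue of Csordas--Smith--Varga in the function field, bounding $\Lambda_D$ below in terms of the spacing of a close pair of real zeros — should go through because $\Xi_t(x,\chi_D)$ satisfies the backwards heat equation $\partial_t F + \partial_{xx}F = 0$ just as in the classical case, so the differential equations governing the motion of the zeros are formally identical; the only subtlety is that here there are finitely many zeros on a circle, which if anything simplifies matters. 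A cleaner alternative that sidesteps the effectivity issue entirely is to stay in degree $3$ and use the full strength of Sato--Tate for the fixed non-CM elliptic curve $y^2 = \mathcal D(T)$ over $\bbQ$: Sato--Tate gives primes $p_n$ with $a_{p_n}(\mathcal D)/(2\sqrt{p_n}) \to 1$, hence $\Lambda_{D_{p_n}} \to 0$ — but this varies $q = p_n$, so it proves the "fixed $q$" conjecture only after the degree-increasing construction above; I would present both and remark that the first is unconditional while the degree-$3$ route is what yields the precise $a_p$ relationship advertised in the abstract.
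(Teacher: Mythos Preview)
This statement is a \emph{conjecture} in the paper, not a theorem; the paper offers no proof. What the paper does provide is (i) numerical evidence for $q=3$ in the appendix, and (ii) a conditional mechanism, \Cref{lem:stopple-bound-on-Lambda} and \Cref{lem:fn-field-crude-conditions}, which would yield $\sup_D \Lambda_D = 0$ for a family $\mathcal F$ \emph{if} one could exhibit a sequence $D_n \in \mathcal F$ with $\tilde\gamma_1(D_n)^2 \le \frac{1}{500\,g(D_n)}$ and $\tilde\gamma_2(D_n)$ of order $1$. The paper explicitly notes that Katz--Sarnak supplies such sequences only ``for many families with $g$ and $q$ tending to infinity,'' and stops short of claiming anything for fixed $q$.

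Your proposal has a genuine gap at exactly this point. Deligne's equidistribution theorem, as used by Katz and Sarnak for the hyperelliptic family, is a statement in the regime $q \to \infty$: for each fixed $g$ (or for $g$ growing suitably with $q$) the Frobenius conjugacy classes equidistribute in $USp(2g)$ as $q \to \infty$. Your argument needs the opposite regime --- $q$ fixed, $g \to \infty$ --- and for that the Haar-equidistribution you invoke is not known. In particular, the assertion that ``a positive proportion of $D$ of large degree has two zeros within $\delta$'' does not follow from the Katz--Sarnak machinery when $q$ is held constant; this is precisely why the paper refrains from claiming \Cref{conj:newman-fixed-q} as a theorem. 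Your own caveat (``making the equidistribution argument effective enough'') understates the difficulty: it is not a matter of effectivity but of the equidistribution itself being unavailable in the fixed-$q$ limit.

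Two smaller remarks. First, you correctly recognise that the degree-$3$/Sato--Tate route proves \Cref{conj:newman-fixed-D} rather than \Cref{conj:newman-fixed-q}; for fixed $q$ there are only finitely many degree-$3$ good pairs, so $\max_{\deg D = 3} \Lambda_D = \log\frac{\lfloor 2\sqrt q\rfloor}{2\sqrt q} < 0$ and one is forced to let $\deg D$ grow. Second, the paper's route through \Cref{lem:stopple-bound-on-Lambda} looks for an unusually \emph{low-lying} first zero (so that $\gamma_1$ and $-\gamma_1$ coalesce), whereas you look for a \emph{close pair} $\gamma_j,\gamma_k$ anywhere; the two are related but not identical, and the paper's version has the advantage that the relevant quantity $G$ reduces to the finite sum \eqref{eq:G-finite-sum}. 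Either formulation, however, runs into the same missing equidistribution input once $q$ is fixed.
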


\begin{conj}[Newman's conjecture, fixed $g$]\label{conj:newman-fixed-g}
Keep $g$, the genus, fixed. Then
\begin{equation}
\sup_{\substack{\deg D \ = \  2g+1\\(q, D) {\rm\ good\ pair}}} \Lambda_D
\geq 0
.
\end{equation}
\end{conj}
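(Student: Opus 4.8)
The plan is to convert \Cref{conj:newman-fixed-g} into an arithmetic statement about Frobenius eigenvalues, settle the genus-one case, and then indicate what remains for $g\ge 2$.

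\emph{Reduction.} First I would observe that $\Lambda_D=0$ exactly when $\Xi_0(x,\chi_D)$ has a zero of multiplicity at least $2$: a multiple zero of $\Xi_0$ forces $0\le\Lambda_D$ by \Cref{lem:double-zero-lower-bound}, RH for curves (\Cref{rem:RH-curves}) forces $\Lambda_D\le 0$, and if every zero of $\Xi_0$ is simple then $\Lambda_D<0$ by \Cref{lem:simple-implies-lambda-negative}. Unwinding \eqref{eq:xi-function-definition}, the $2g$ zeros of $\Xi_0(x,\chi_D)$ in a period $[0,2\pi)$ are precisely the Frobenius angles $\theta_1,\dots,\theta_{2g}$ of the hyperelliptic curve $C_D\colon y^2=D(T)$ over $\bbF_q$ (the arguments of the inverse roots $\alpha_j=\sqrt q\,e^{i\theta_j}$ of $L(s,\chi_D)$). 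Hence $\Lambda_D=0$ iff $C_D$ has a repeated Frobenius eigenvalue, and — using RH again for the bound $\sup\le 0$ — it suffices, for each fixed $g$, to exhibit one good pair $(q,D)$ with $\deg D=2g+1$ for which $C_D$ has a repeated Frobenius eigenvalue.

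\emph{The case $g=1$.} Take a prime $p\equiv 3\pmod 4$, put $q=p^2$ and $D=T^3-T\in\bbF_q[T]$ (monic, squarefree because $0,\pm1$ are distinct, of odd degree $3$). The curve $y^2=T^3-T$ is supersingular over $\bbF_p$ with $a_p=0$, so its Frobenius eigenvalues over $\bbF_p$ are $\pm i\sqrt p$ and over $\bbF_{p^2}$ both equal $-p=-\sqrt q$; thus $L(s,\chi_D)=(1+\sqrt q\,q^{-s})^2$ has a double root and $\Lambda_D=0$, so $\sup=0$. (Alternatively, the genus-one case is immediate from the theorem above on $\deg\mathcal D=3$: for a fixed non-CM $\mathcal D$ one has $\Lambda_{D_p}=\log\bigl(|a_p(\mathcal D)|/2\sqrt p\bigr)$, and Sato--Tate supplies primes with $|a_p(\mathcal D)|/2\sqrt p\to1$.)

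\emph{The case $g\ge 2$ and the main obstacle.} The reduction applies verbatim, so one wants a genus-$g$ hyperelliptic curve over some $\bbF_q$ with a repeated Frobenius eigenvalue; candidates are curves with $\operatorname{Jac}(C_D)\sim E^2\times A$ (each eigenvalue of the elliptic curve $E$ is then doubled), or maximal/superspecial hyperelliptic curves over $\bbF_{p^2}$ (every eigenvalue equal to $-p$), realized inside explicit families with extra automorphisms such as $y^2=T^{2g+1}+a$ with $(2g+1)\mid q-1$ and $p$ in a congruence class forcing supersingularity. The main obstacle is \emph{uniformity in $g$}: not every isogeny class of abelian variety is a hyperelliptic Jacobian, and maximal/superspecial hyperelliptic curves of genus $g$ exist over $\bbF_{p^2}$ only for $p$ not too small relative to $g$, so a ``for every $g$, some good pair works'' statement is not a formal consequence of anything in this excerpt — this arithmetic point is why \Cref{conj:newman-fixed-g} is only conjectured. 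Failing an exact coincidence, the fallback is to fix a genus-$g$ hyperelliptic curve over $\bbQ$, reduce modulo primes, and combine equidistribution (Sato--Tate for $\mathrm{USp}(2g)$, where available) with a quantitative ``close pair of zeros forces $\Lambda$ near $0$'' estimate for trigonometric polynomials in the spirit of \cite{csv}; for $g\ge 2$ the latter needs a perturbation analysis of the backwards-heat flow near a multiple-zero configuration, and that is the analytic crux.
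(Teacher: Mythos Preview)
There is nothing to compare against: in the paper this statement is a \emph{conjecture}, not a theorem, and no proof is offered for general $g$. The only case the paper settles, and only implicitly, is $g=1$, as a corollary of the fixed-$\mathcal D$, $\deg\mathcal D=3$ theorem via Sato--Tate (any one such $\mathcal D$ already gives $\sup_p\Lambda_{D_p}=0$, and those $D_p$ lie in the $g=1$ family).

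Your reduction is correct: with RH for curves and \Cref{lem:simple-implies-lambda-negative} on one side and the double-zero lemma on the other, $\Lambda_D=0$ is equivalent to $\Xi_0(\cdot,\chi_D)$ having a repeated zero, i.e.\ a repeated Frobenius eigenvalue of $C_D$. (Small fix: the relevant double-zero lemma in this setting is \Cref{lem:double-zero-lower-bound-fnfield}, not \Cref{lem:double-zero-lower-bound}.) Your $g=1$ construction with $D=T^3-T$ over $\bbF_{p^2}$, $p\equiv 3\pmod 4$, is valid and in fact cleaner than the paper's route: it produces a single good pair with $\Lambda_D=0$ exactly, whereas the paper only obtains $0$ as a supremum along a Sato--Tate sequence. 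Both establish the $g=1$ case; yours is more elementary and does not invoke Sato--Tate.

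For $g\ge 2$ you correctly flag that neither an explicit repeated-eigenvalue construction nor a close-pair/equidistribution argument is carried out here, and that this is precisely why the statement remains conjectural in the paper. Your outline of candidate constructions (split Jacobians, maximal or superspecial hyperelliptic curves over $\bbF_{p^2}$) and of the analytic fallback is reasonable as a research plan, but none of it is in the paper, and you should not present this as a proof. In short: your proposal matches the paper's status --- proved for $g=1$ (by a different, more direct argument on your side), open for $g\ge2$.
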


\begin{conj}[Newman's conjecture, fixed $\mathcal D$]\label{conj:newman-fixed-D}
Fix $\mathcal D \in \bbZ[T]$ square-free. For each prime $p$, let $D_p$ be the polynomial in $\bbF_p[T]$ obtained by reducing $\mathcal D$ modulo $p$. Then
\begin{equation}
\sup_{(p, D_p) {\rm\ good\ pair}} \Lambda_{D_p}
\geq 0
,
\end{equation}
%
\end{conj}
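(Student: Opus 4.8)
The general conjecture for arbitrary $\deg\mathcal D$ looks out of reach, so the plan is to prove the one case the machinery of \Cref{sec:fn-fields} handles cleanly, $\deg\mathcal D=3$ — this is the Theorem announced in the introduction, which gives the closed form $\Lambda_{D_p}=\log\frac{|a_p(\mathcal D)|}{2\sqrt p}$ and hence $\sup_p\Lambda_{D_p}=0$. First I would specialize the Fourier expansion \eqref{eq:xi-function-definition}: when $\deg D_p=3$ the genus is $g=\tfrac12(\deg D_p-1)=1$, so the series collapses to two terms, $\Xi_t(x,\chi_{D_p})=\Phi_0+\Phi_1 e^{t}(e^{ix}+e^{-ix})$. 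From \eqref{eq:fourier-coefficients} with $g=1$ and $c_0=1$ we read off $\Phi_1=c_0 q^{1/2}=\sqrt p$ and $\Phi_0=c_1$, so the whole problem reduces to computing the single coefficient $c_1=\sum_{\deg f=1,\ f\text{ monic}}\chi_{D_p}(f)$ from \eqref{eq:l-fn-coefficients}.

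Second, I would evaluate $c_1$ via the point count on the curve. The monic linear polynomials are $T-a$, $a\in\bbF_p$, and $\chi_{D_p}(T-a)=\left(\tfrac{D_p(a)}{p}\right)$ is the Legendre symbol of the value $D_p(a)$; hence $c_1=\sum_{a\in\bbF_p}\left(\tfrac{D_p(a)}{p}\right)$. Counting affine points of $y^2=D_p(T)$ gives $\#\{(x,y):y^2=D_p(x)\}=p+\sum_a\left(\tfrac{D_p(a)}{p}\right)$, and adding the point at infinity and comparing with $\#E(\bbF_p)=p+1-a_p(\mathcal D)$ yields $c_1=-a_p(\mathcal D)$. (Finitely many primes dividing the discriminant of $\mathcal D$ are excluded as not giving good pairs, and one may assume $\mathcal D$ monic for simplicity, the general case being a harmless quadratic twist.)

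Third, we now have $\Xi_t(x,\chi_{D_p})=-a_p(\mathcal D)+2\sqrt p\,e^{t}\cos x$, so its zero set is described by $\cos x=\frac{a_p(\mathcal D)}{2\sqrt p\,e^{t}}$. Since $\arccos$ carries $[-1,1]$ into $\bbR$ but sends a real number of modulus $>1$ to a non-real value, \emph{every} zero of $\Xi_t$ is real exactly when $\bigl|\tfrac{a_p}{2\sqrt p\,e^t}\bigr|\le 1$, i.e.\ $t\ge\log\frac{|a_p(\mathcal D)|}{2\sqrt p}$, and \emph{every} zero is non-real when $t$ is strictly smaller; by the defining property of the De Bruijn--Newman constant this gives $\Lambda_{D_p}=\log\frac{|a_p(\mathcal D)|}{2\sqrt p}$ on the nose (with $\log 0=-\infty$ covering supersingular $p$, matching \Cref{rem:example-with-minus-infinity}). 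Two consistency checks: the Hasse bound $|a_p|\le 2\sqrt p$ forces $\Lambda_{D_p}\le 0$, as it must by RH for curves (\Cref{rem:RH-curves}), and at the critical value $t=\Lambda_{D_p}$ one has $\cos x=\pm1$, a double zero at $x\in\{0,\pi\}$, consistent with the double-zero heuristic behind \Cref{lem:double-zero-lower-bound}.

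Finally, for $\sup_p\Lambda_{D_p}=0$ I would invoke the Sato--Tate theorem \cite{B-LGHT, satotate1, satotate3, satotate2}: writing $a_p(\mathcal D)=2\sqrt p\cos\theta_p$, if $E:y^2=\mathcal D(T)$ is non-CM the angles $\theta_p$ equidistribute in $[0,\pi]$ against $\tfrac{2}{\pi}\sin^2\theta\,d\theta$, a measure of full support, so $|\cos\theta_p|\to 1$ along a subsequence of primes; combined with $|\cos\theta_p|\le1$ this gives $\sup_p\frac{|a_p(\mathcal D)|}{2\sqrt p}=1$, hence $\sup_p\Lambda_{D_p}=0$. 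The CM case is covered separately by the classical Hecke/Deuring equidistribution of the Frobenius angle, whose limiting measure also puts mass near $0$. \textbf{The one genuinely deep ingredient — and the step I expect to be the main obstacle — is this last one:} upgrading ``$\sup\ge(\text{explicit negative number})$'' to ``$\sup=0$'' requires that Frobenius angles cluster arbitrarily close to $0$, which is precisely the force of Sato--Tate; steps one through three are elementary once one notices that genus $1$ makes the Fourier series two-dimensional, the only care needed there being the finitely many bad primes and the monic normalization.
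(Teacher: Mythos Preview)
Your proposal is correct and follows essentially the same route as the paper: both restrict to $\deg\mathcal D=3$, write $\Xi_t(x,\chi_{D_p})=-a_p(\mathcal D)+2\sqrt{p}\,e^t\cos x$, solve explicitly to get $\Lambda_{D_p}=\log\frac{|a_p(\mathcal D)|}{2\sqrt p}$, and then invoke Sato--Tate (with the CM case handled separately by the classical equidistribution) to produce primes with $|a_p|/2\sqrt p\to 1$. Your write-up is in fact slightly more careful than the paper's on two points---you derive the sign $c_1=-a_p(\mathcal D)$ via the point count rather than asserting it, and you flag the CM case explicitly---but the architecture of the argument is identical.
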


\begin{rem}
As RH has been proved in this setting in the conjectures above, we could replace the greater than or equal to $0$ with equal to $0$; we wrote it as above to remind the reader of the analogues of Newman's conjecture in the number field setting.
\end{rem}

More generally, let $\mathcal F$ be a set of polynomials $D$ belonging to good pairs $(q, D)$ (where $q$ can vary). The corresponding family of $L$-functions is $\{ L(s, \chi_D) : D \in \mathcal F\}$. (We often use $\mathcal F$ to refer to not only the family of polynomials but also the family of $L$-functions.) The Newman's conjecture for a family $\mathcal F$ is the statement that
\begin{equation}
\sup_{D \in \mathcal F} \Lambda_D \geq 0
.
\end{equation}

The families $\mathcal F$ corresponding to \Cref{conj:newman-fixed-q}, \Cref{conj:newman-fixed-g} and \Cref{conj:newman-fixed-D}, respectively, are
\begin{itemize}
\item
Fix $q$ and let $\mathcal F = \{ D : (q, D) \text{ is a good pair} \}$.
\item
Fix $g$ and let $\mathcal F = \{ D : (q, D) \text{ is a good pair}, \deg{D} = 2g+1 \}$.
\item
Fix $\mathcal D \in \bbZ[T]$ square-free and let $\mathcal F = \{ D_p : (p, D_p) \text{ is a good pair}\}$.
\end{itemize}

\subsection{The case $\deg \mathcal D = 3$ and the Sato--Tate Conjecture}\label{subsec:deg-3-case}

We examine a special case of \Cref{conj:newman-fixed-D} in which the fixed square-free polynomial $\mathcal D \in \bbZ[T]$ satisfies $\deg \mathcal D = 3$, so $g = 1$. In this section we prove this special case of Newman's conjecture.

For a fixed $\mathcal D$ of degree $3$, the corresponding $\Xi$ functions have the form
\begin{equation}
\Xi_t(x, \chi_{D_p}) \ = \  -a_p(\mathcal D) + 2 \sqrt{q} e^{t} \cos x,
\end{equation}
where
\begin{equation}
a_p(\mathcal D)
 \ = \
\sum_{
 \substack{f \in \bbF_p[T] \\ \deg f = 1\\ f \text{ monic}}
}
\chi_{D_p}(f)
.
\end{equation}
Note that $a_p(\mathcal D)$ is the trace of Frobenius of the elliptic curve $y^2 = \mathcal D(T)$. In this setting, we get an explicit formula for $\Lambda_{D_p}$.

\begin{lem}
Let $\mathcal D \in \bbZ[T]$ be a square-free polynomial of degree $3$. Let $D_p$ be the polynomial in $\bbF_p[T]$ obtained by reducing $\mathcal D$ modulo $p$. Then
\begin{equation}
\Lambda_{D_p} \ = \  \log \frac{|a_p(\mathcal D)|}{2\sqrt{p}}.
\end{equation}
\end{lem}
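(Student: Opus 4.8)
The plan is to sidestep \Cref{lem:de-bruijn-analogue} entirely and argue by hand, exploiting the fact that for $g = 1$ the function in question is completely explicit: as recorded just above the statement,
\[
\Xi_t(x, \chi_{D_p}) \ = \ -a_p(\mathcal D) + 2\sqrt{p}\, e^{t}\cos x .
\]
Write $a := a_p(\mathcal D)$. Then $x \in \bbC$ is a zero of $\Xi_t(\cdotwild, \chi_{D_p})$ if and only if $\cos x = a/(2\sqrt{p}\, e^{t}) =: c(t)$, a real number, so the whole problem collapses to the question: for which real $c$ does $\cos x = c$ admit a non-real solution? The elementary input I would use is that $\cos x = c$ has only real solutions when $|c| \le 1$ and has a non-real solution when $|c| > 1$ --- in the latter case $x = i\operatorname{arccosh} c$ works if $c \ge 1$, and $x = \pi + i\operatorname{arccosh}(-c)$ if $c \le -1$.

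With this in hand the proof closes in one line: $\Xi_t(\cdotwild, \chi_{D_p})$ has only real zeros precisely when $|c(t)| \le 1$, i.e.\ when $e^{t} \ge |a|/(2\sqrt{p})$, i.e.\ when $t \ge \log\big(|a|/(2\sqrt{p})\big)$, and it has a non-real zero for every smaller $t$. Since $\Lambda_{D_p}$ is by definition the threshold between these two regimes, $\Lambda_{D_p} = \log\frac{|a_p(\mathcal D)|}{2\sqrt{p}}$ (the sign convention on $a_p$ being irrelevant, as only $|a_p|$ enters). As a sanity check, the Hasse--Weil bound $|a_p(\mathcal D)| \le 2\sqrt{p}$ forces this quantity to be $\le 0$, consistent with $\Lambda_{D_p} \in [-\infty, 0]$; and when $a_p(\mathcal D) = 0$ the function $\Xi_t$ is a nonzero multiple of $\cos x$ for every $t$, so $\Lambda_{D_p} = -\infty = \log 0$, matching \Cref{rem:example-with-minus-infinity}.

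I do not expect any genuine obstacle --- the computation is essentially trivial once the $g = 1$ specialization of \eqref{eq:xi-function-definition}--\eqref{eq:fourier-coefficients} has been written down (using $c_0 = 1$, so that the top Fourier coefficient is $\Phi_1 = \sqrt{p}$, and $c_g = c_1 = \pm a_p(\mathcal D)$). The only points meriting a moment's care are: (i) the boundary value $t = \log\frac{|a|}{2\sqrt{p}}$, where $|c(t)| = 1$ and $\Xi_t$ acquires a double zero at $x \equiv 0$ or $\pi \pmod{2\pi}$ --- this zero is still real, so $t = \Lambda_{D_p}$ correctly lies in the ``only real zeros'' regime, in accord with the function-field analogue of \Cref{lem:double-zero-lower-bound}; and (ii) the standing assumption that $(p, D_p)$ is a good pair, under which the quoted formula for $\Xi_t$ is valid and which fails only for the finitely many primes dividing the leading coefficient or the discriminant of $\mathcal D$.
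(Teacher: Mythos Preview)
Your proposal is correct and follows essentially the same approach as the paper: both solve $\cos x = a_p(\mathcal D)/(2\sqrt{p}\,e^{t})$ and observe that this has only real solutions precisely when the right-hand side lies in $[-1,1]$, i.e.\ when $t \ge \log\frac{|a_p(\mathcal D)|}{2\sqrt{p}}$. Your write-up is in fact a bit more careful than the paper's (you treat the boundary $|c(t)|=1$ and the degenerate case $a_p=0$ explicitly), but the core argument is identical.
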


\begin{proof}
Fix $t$ and suppose $x_0$ is a zero of $\Xi_t$. Then
\begin{equation}
\cos x_0
\ = \
\frac{1}{ e^t} \cdot \frac{2 \sqrt{p}}{a_p(\mathcal D)}.
\end{equation}
If $e^t \geq \frac{|a_p(\mathcal D)|}{2\sqrt{p}} $, then $-1 \leq \cos x_0 \leq 1$, so $x_0$ is real. On the other hand, if $e^{t} < \frac{|a(\mathcal D)|}{2\sqrt{q}}$, then $|\cos x_0| > 1$, which implies $x_0$ is not real.
\end{proof}

In order to show that $\sup_p \Lambda_{D_p} = 0$, we need a sequence of primes $p_1, p_2, \ldots$ such that
\begin{equation}\label{eq:seq-primes-to-1}
\lim_{n \to \infty}
\frac{a_{p_n}(\mathcal D)}{2\sqrt{p_n}}
\to
1
.
\end{equation}
Thus we need to investigate the distribution of $\frac{|a_p(\mathcal D)|}{2\sqrt{p}}$ as $p$ varies. Hasse showed in the 1930s that $-1 < \frac{a_p(\mathcal D)}{2\sqrt{p}} < 1$ \cite[Theorem V.1.1]{silverman}. (In fact, this is a special case of RH for curves over a finite field.)

It is natural to let $\theta_p \in (0, \pi)$  satisfy
\begin{equation}
\cos \theta_p
\ = \
\frac{a_p(D)}{2\sqrt{p}}
\end{equation}
and to study the distribution of $\theta_p$ as $p$ ranges.

If $\mathcal D$ has complex multiplication, then there is a spike at $\theta_p = \pi$ and otherwise a uniform distribution on $(0, \pi)$.

If $\mathcal D$ does not have complex multiplication, then the distribution is conjectured to satisfy the semi-circle distribution:
\begin{equation}
\lim_{N \to \infty}
\frac{
\# \{ p \leq N : \alpha < \theta_p < \beta \}
}{
\# \{ p \leq N \}
}
\ = \
\frac{2}{\pi} \int_\alpha^\beta \sin^2 \theta \, d\theta.
\end{equation}
This is a specific case of the Sato--Tate conjecture. Clozel, Harris, Shepherd-Barron and Taylor \cite{satotate1, satotate3, satotate2} proved this for elliptic curves without complex multiplication, provided that there is at least one prime of multiplicative reduction; that assumption was recently removed by  Barnet-Lamb, Geraghty, Harris and Taylor \cite{B-LGHT}. Thus, for any square-free $\mathcal D \in \bbZ[T]$, we can find a sequence of primes satisfying \eqref{eq:seq-primes-to-1}. We have therefore proved the following.

\begin{thm}[Newman's conjecture for fixed $\mathcal D$, $\deg \mathcal D = 3$]
Let $\mathcal D \in \bbZ[T]$ be square-free with $\deg \mathcal D = 3$. Then $\sup_{p} \Lambda_{D_p} = 0$.
\end{thm}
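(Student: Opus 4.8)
The plan is to reduce the assertion to a statement about the distribution of the Frobenius angles of the elliptic curve $E : y^2 = \mathcal D(T)$ and then invoke the Sato--Tate theorem. The inequality $\sup_p \Lambda_{D_p} \le 0$ is immediate from what precedes: by the explicit formula $\Lambda_{D_p} = \log\frac{|a_p(\mathcal D)|}{2\sqrt p}$ just established, together with the Hasse bound $|a_p(\mathcal D)| < 2\sqrt p$ (a special case of RH for curves, \cite[Theorem V.1.1]{silverman}), each term satisfies $\Lambda_{D_p} < 0$. For the reverse inequality it suffices to produce primes $p_1 < p_2 < \cdots$ with $\frac{|a_{p_n}(\mathcal D)|}{2\sqrt{p_n}} \to 1$; writing $\cos\theta_{p} = a_p(\mathcal D)/(2\sqrt p)$ with $\theta_p \in (0,\pi)$, this means $\theta_{p_n} \to 0$ (or $\to \pi$), and then $\Lambda_{D_{p_n}} = \log|\cos\theta_{p_n}| \to 0^{-}$, forcing $\sup_p \Lambda_{D_p} = 0$.

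Before constructing the sequence I would clear the bookkeeping that guarantees $(p, D_p)$ is a good pair. Write $\mathcal D = c_3 T^3 + \cdots$ with $c_3 \neq 0$; since $\mathcal D$ is square-free its discriminant is nonzero, and for every prime $p \nmid c_3 \cdot \operatorname{disc}(\mathcal D)$ the reduction $D_p$ has degree $3$ and is square-free over $\bbF_p$, hence (after rescaling by $c_3^{-1} \bmod p$, which does not change $\chi_{D_p}$) may be taken monic. Thus $(p, D_p)$ is a good pair for all but finitely many $p$. Primes with $a_p(\mathcal D) = 0$ would give $\Lambda_{D_p} = -\infty$; they are simply excluded, which is harmless because the primes we select will have $\theta_{p_n}$ close to $0$ and hence $a_{p_n}(\mathcal D) \neq 0$.

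Now I would split into cases according to whether $E$ has complex multiplication. If it does, then by the classical theory of complex multiplication (Deuring, Hecke) the ordinary primes have positive density and their angles $\theta_p$ equidistribute with respect to Lebesgue measure on $(0,\pi)$; in particular a positive proportion of primes has $\theta_p \in (0, 1/n)$ for every $n$. If $E$ has no complex multiplication, the Sato--Tate theorem \cite{B-LGHT, satotate1, satotate3, satotate2} gives
\begin{equation}
\lim_{N \to \infty} \frac{\#\{ p \le N : 0 < \theta_p < 1/n \}}{\#\{ p \le N \}} \ = \ \frac{2}{\pi} \int_0^{1/n} \sin^2\theta \, d\theta \ > \ 0 ,
\end{equation}
so again a positive proportion of primes satisfies $\theta_p \in (0, 1/n)$. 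In either case one chooses, recursively, a prime $p_n > p_{n-1}$ with $\theta_{p_n} < 1/n$ and $p_n \nmid c_3 \cdot \operatorname{disc}(\mathcal D)$ (the latter excludes only finitely many primes, so a set of positive density still contains infinitely many admissible ones). Then each $(p_n, D_{p_n})$ is a good pair with $a_{p_n}(\mathcal D) \neq 0$ and $\Lambda_{D_{p_n}} = \log|\cos\theta_{p_n}| \to 0^{-}$, which combined with the upper bound yields $\sup_p \Lambda_{D_p} = 0$.

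The main obstacle is, of course, the non-CM case, where the needed equidistribution is precisely the Sato--Tate conjecture, now a theorem but one resting on deep automorphy-lifting results; the CM case, the Hasse bound, and the good-pair bookkeeping are all classical and routine. I would emphasize that the argument uses far less than the precise Sato--Tate measure: all that is needed is that the limiting distribution of $\{\theta_p\}$ assigns positive mass to every neighborhood of $0$ (or of $\pi$), which is guaranteed by the strict positivity of $\sin^2\theta$ there, so any equidistribution statement against a measure of full support on $[0,\pi]$ would suffice.
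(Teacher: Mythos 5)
Your proof is correct and follows essentially the same route as the paper: the explicit formula $\Lambda_{D_p} = \log\frac{|a_p(\mathcal D)|}{2\sqrt p}$, Hasse's bound for $\sup_p \Lambda_{D_p} \le 0$, and Sato--Tate (respectively Deuring's equidistribution in the CM case) to produce primes with $\theta_{p_n} \to 0$. Your good-pair bookkeeping is a welcome addition the paper leaves implicit; the only imprecision is the parenthetical claim that rescaling by $c_3^{-1}$ does not change $\chi_{D_p}$: when $c_3$ is a non-square modulo $p$ this multiplies $\chi_{D_p}$ by the character $f \mapsto (-1)^{\deg f}$, which translates the zeros of $\Xi_0$ by $\pi$ but leaves $|a_p|$, and hence $\Lambda_{D_p}$, unchanged, so the conclusion is unaffected.
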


\begin{rem}
When $g \geq 2$, then $\Xi_t$ contains multiple $e^{t}$ terms and multiple $\cos nx$ terms, making it much harder to find the explicit expression of $\Lambda_{D_p}$.
\end{rem}

\subsection{Zeros of $\Xi_0(x, \chi_D)$}\label{sec:notation-for-zeros}

In this section, we introduce notation for the zeros of $\Xi(x, \chi_D)$ and discuss basic properties, which will be used in the remainder of the paper.

\begin{rem}
Because of \eqref{eq:xi-function-definition}, a zero $\gamma$ of $\Xi(x, \chi_D)$ corresponds to a zero $\frac{1}{2} + \frac{i\gamma}{\log q}$ of $L(s, \chi_D)$.
\end{rem}

The following analogue of \Cref{lem:double-zero-lower-bound} gives us a lower bound on $\Lambda_D$ via double zeros.

\begin{lem}\label{lem:double-zero-lower-bound-fnfield}
Let $(q, D)$ be a good pair. Let $t_0 \in \bbR$. If $\Xi_{t_0}(x, \chi_D)$ has a zero $x_0$ of order at least $2$, then $t_0 \leq \Lambda_D$.
\end{lem}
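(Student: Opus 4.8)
The plan is to mimic the proof of \Cref{lem:double-zero-lower-bound} from \cite{csv}, but to replace the Fourier-integral representation of $\Xi_t$ by the Fourier-series representation valid in the function field setting, and to replace the De Bruijn strip lemma of \cite{debruijn} by its analogue \Cref{lem:de-bruijn-analogue}. The heart of the matter is the same as in the number field case: the family $\Xi_t(x,\chi_D)$ solves the backwards heat equation $\partial_t F + \partial_{xx} F = 0$, so a zero of order $\ge 2$ at time $t_0$ should force zeros to leave the real line as $t$ decreases below $t_0$; equivalently, a double zero at time $t_0$ cannot be ``pushed in'' from a configuration with only real zeros at any earlier time.

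Concretely, I would argue by contradiction. Suppose $\Xi_{t_0}(\cdot,\chi_D)$ has a zero $x_0$ of order $\ge 2$ but $t_0 > \Lambda_D$. Since $t_0 > \Lambda_D$ we may pick $t_1$ with $\Lambda_D < t_1 < t_0$; by definition of $\Lambda_D$, $\Xi_{t_1}(\cdot,\chi_D)$ has only real zeros, so all its zeros lie in the strip $|\Im x| \le \Delta$ with $\Delta = 0$. Now apply \Cref{lem:de-bruijn-analogue} with the coefficient sequence $F(n) = \Phi_{|n|}$ for $|n|\le g$ and $F(n)=0$ otherwise (this sequence satisfies the hypotheses of that lemma, as used in the proof of \Cref{lem:real-zeros-stay-real-fnfield}), running the deformation from time $t_1$ to time $t_0$, i.e. with deformation parameter $t_0 - t_1 > 0$. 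The conclusion is that all roots of $\Xi_{t_0}(\cdot,\chi_D)$ lie in the strip $|\Im x| \le \max(0 - 2(t_0-t_1),0)^{1/2} = 0$; that is, $\Xi_{t_0}$ has only real, and moreover only \emph{simple}, zeros. The simplicity is the key extra output: De Bruijn's argument (the maximum-of-imaginary-parts / zero-counting analysis that proves \Cref{lem:de-bruijn-analogue}) actually shows that once the deformation parameter is strictly positive, the zeros are not merely real but strictly separated, so none can have order $\ge 2$. This contradicts the assumption that $x_0$ has order $\ge 2$, and hence $t_0 \le \Lambda_D$.

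The step I expect to be the main obstacle is justifying that strict-separation (hence simplicity) conclusion cleanly, since \Cref{lem:de-bruijn-analogue} as literally stated only asserts a bound on the imaginary parts of the roots, not simplicity. There are two ways to handle this, and I would pursue whichever is cleaner to write up. First option: sharpen \Cref{lem:de-bruijn-analogue} (or rather, extract from its proof) the statement that for deformation parameter $\tau > 0$ the roots of $\sum F(n) e^{\tau n^2} e^{inx}$ are not only real but simple; this is exactly the function-field analogue of the fact that the Gaussian smoothing kernel strictly separates zeros, and it follows from the same Hermite–Biehler / Laguerre–P\'olya type reasoning that underlies De Bruijn's Theorems 11--13. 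Second option, avoiding any sharpening: use the backwards heat equation directly. If $x_0$ is a zero of order $\ge 2$ of $F(\cdot, t_0) = \Xi_{t_0}$, then $\partial_x F(x_0,t_0) = 0$ and $\partial_{xx} F(x_0,t_0) = 0$ (order $\ge 2$), so the PDE gives $\partial_t F(x_0,t_0) = -\partial_{xx}F(x_0,t_0) = 0$ as well. One then runs the standard Csordas--Smith--Varga perturbation analysis: expanding near $(x_0,t_0)$ and tracking the two zeros that collide at $x_0$, one finds that for $t$ slightly less than $t_0$ they move off into complex-conjugate positions (the relevant discriminant changes sign because the $\partial_{xx}$ term vanishes), so $\Xi_t$ has a non-real zero for $t$ slightly below $t_0$, whence $\Lambda_D \ge t_0$. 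This second route is essentially a transcription of the proof of \Cref{lem:double-zero-lower-bound} and needs only that $\Xi_t$ is real-analytic jointly in $(x,t)$, which is immediate from the finite Fourier-series form; the only care needed is to handle the degenerate case where $\Xi_{t_0}$ is (a multiple of) a function like $\Phi_0 + \Phi_g e^{t_0 g^2}(e^{igx}+e^{-igx})$, but then one checks directly as in \Cref{rem:example-with-minus-infinity} and the proof of \Cref{lem:polya-false-irreducibles} that a double zero forces the required complex zeros just below $t_0$.

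Either way the write-up is short: state that the proof is the function-field analogue of \cite[Theorem 2.2]{csv}, record the backwards heat equation $\partial_t \Xi_t + \partial_{xx}\Xi_t = 0$ (valid term-by-term for the finite sum), and then either cite the sharpened \Cref{lem:de-bruijn-analogue} or run the one-paragraph collision argument. I would lead with the collision argument since it is self-contained given the material already in the excerpt and parallels the number field exposition most transparently.
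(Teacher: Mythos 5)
Your second route---record the backwards heat equation $\partial_t \Xi_t + \partial_{xx}\Xi_t = 0$ and run the Csordas--Smith--Varga/Stopple collision argument---is exactly what the paper does; the paper's proof is a one-liner that notes the PDE and cites Stopple's Lemma (p.~7 of that paper). Your first route would indeed require a genuinely sharper version of \Cref{lem:de-bruijn-analogue} (real zeros \emph{and} strict simplicity after a strictly positive Gaussian deformation), which is not stated in the paper and would need a separate proof; you correctly flag this and defer to the collision argument, so that is not an obstacle.

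One slip to correct before you write up the collision route. A zero of order $\ge 2$ at $x_0$ gives $F(x_0,t_0)=0$ and $\partial_x F(x_0,t_0)=0$, but it does \emph{not} give $\partial_{xx}F(x_0,t_0)=0$; that would be order $\ge 3$. In the generic order-exactly-$2$ case the argument in fact \emph{relies} on $\partial_{xx}F(x_0,t_0)\neq 0$: the PDE then forces $\partial_t F(x_0,t_0)=-\partial_{xx}F(x_0,t_0)\neq 0$, and the leading-order expansion
\begin{equation*}
F(x,t)\ \approx\ \tfrac12\,\partial_{xx}F(x_0,t_0)\,\bigl[(x-x_0)^2-2(t-t_0)\bigr]
\end{equation*}
shows that the local root pair satisfies $(x-x_0)^2\approx 2(t-t_0)$, which is negative for $t<t_0$, producing a complex-conjugate pair. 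So the parenthetical ``the discriminant changes sign because the $\partial_{xx}$ term vanishes'' has the mechanism backwards: the discriminant $2(t-t_0)$ changes sign as $t$ crosses $t_0$ precisely because $\partial_{xx}F\neq 0$ and the PDE pins down $\partial_t F/\partial_{xx}F=-1$. (Zeros of order $\ge 3$ need a further step, but the cited CSV/Stopple argument covers that case too.) With that fixed, your plan matches the paper's.
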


\begin{proof}
If $F(x, t) = \Xi_t(x, \chi_D)$, then $F$ satisfies the backwards heat equation: $\partial_{t} F + \partial_{xx} F = 0$. Using the observation, the lemma follows via the argument given in \cite[Lemma, page 7]{stopple}.
\end{proof}

\begin{rem}
Because of \Cref{lem:double-zero-lower-bound-fnfield}, if $\Xi_0(x, \chi_D)$ has a double zero, then Newman's conjecture is true. For most of the remaining paper, we assume that all the zeros of $\Xi_0$ are simple.
\end{rem}

Let $(q, D)$ be a good pair and assume the zeros of $\Xi_0(x, \chi_D)$ are simple. Because of evenness, this implies that $\Xi_0$ does not have a zero at $x = 0$. Let the positive zeros of $\Xi_0(x, \chi_D)$ be denoted $\gamma_1, \gamma_2, \ldots$, counted with multiplicity. We assume the zeros are ordered so that $0 < \gamma_1 < \gamma_2 < \cdots$.

By \eqref{eq:xi-function-definition}, we see that the first $2g$ zeros lie in the interval $(0, 2\pi)$, and the remaining zeros are repeated by periodicity. Thus, all the zeros of $\Xi$ are given by
\begin{equation}
\{
\gamma_j + 2 \pi \ell
:
j \in \{1, 2, \ldots, 2g \}, \ell \in \bbZ
\}
.
\end{equation}
Next, by evenness and periodicity, we know for $1 \leq j \leq g$, we have $\gamma_{2g+1-j} = - \gamma_j + 2\pi$. This implies that the first $g$ zeros lie in $(0, \pi)$ and the next $g$ zeros lie in $(\pi, 2\pi)$. Thus, all the zeros of $\Xi_0$ are given by
\begin{equation}\label{eq:all-zeros-gamma}
\{
\epsilon \gamma_j + 2 \pi \ell
:
\epsilon \in \{ \pm 1\} , j \in \{1, 2, \ldots, g \}, \ell \in \bbZ
\}
.
\end{equation}
In other words, once we compute the first $g$ zeros of $\Xi_0$, we know the remaining zeros.

\begin{rem}
The observations above still apply if $\Xi_0(x, \chi_D)$ does not have only simple zeros. The only technical detail we have to pay attention to is if $\Xi_0$ has a zero at $x = 0$. We know that $\Xi_0$ has a zero of even order, say $2n$. Then we must let $0 = \gamma_1 = \cdots = \gamma_n < \gamma_{n+1}$, so that $-\gamma_1, \ldots, -\gamma_n$ cover the remaining multiplicities.
\end{rem}

\subsection{Main result of Csordas et.~al.~}

We have an analogue of the main result of \cite{csv} and \cite{stopple}, which can be used to give lower bounds on $\Lambda$.

\begin{lem}\label{lem:stopple-bound-on-Lambda}
Let $(q, D)$ be a good pair and suppose the zeros of $\Xi_0(x, \chi_D)$ are simple. Let the positive zeros of $\Xi_0(z, \chi_D)$ be denoted $\gamma_1, \gamma_2, \ldots$ as described in \Cref{sec:notation-for-zeros}. Define the quantity
\begin{equation}\label{eq:def-G}
G\ = \
\sum_{j = 2}^\infty
\frac{2}{(\gamma_{1} - \gamma_j)^2}.
\end{equation}
Then if $5 \gamma_1^2 G < 1$, we have
\begin{equation}\label{eq:stopple-bound-on-Lambda}
\Lambda_D
\ > \
\frac{(1 - 5 \gamma_1^2 G)^{4/5}-1}{8G}.
\end{equation}
\end{lem}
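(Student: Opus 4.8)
The plan is to adapt the Csordas–Smith–Varga argument (as streamlined by Stopple) to the function-field setting, exploiting the fact that $\Xi_t(x,\chi_D)$ satisfies the backwards heat equation $\partial_t F + \partial_{xx} F = 0$ and that, by periodicity, there are only finitely many zeros per period. The strategy is to track the trajectory of the smallest positive zero $\gamma_1$ of $\Xi_0$ as $t$ decreases, and to show that its imaginary part is forced to become positive before $t$ reaches the bound in \eqref{eq:stopple-bound-on-Lambda}; by \Cref{lem:double-zero-lower-bound-fnfield} (or rather by the definition of $\Lambda_D$), this produces the desired lower bound. Concretely, I would first set up, via the implicit function theorem (as in the proof of \Cref{lem:simple-implies-lambda-negative}), a real-analytic arc $t \mapsto z_1(t)$ of zeros emanating from $\gamma_1$ as long as the zero stays simple, and derive from $\partial_t F + \partial_{xx} F = 0$ the differential equation governing $z_1(t)$ in terms of the logarithmic derivative of $\Xi_t$ at $z_1(t)$.

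The next step is the heart of the matter: obtaining a differential inequality for $\Im z_1(t)$ (equivalently, for how fast the zero can resist moving off the line, or how soon a pair collides). Following \cite{csv}, one writes the motion of a zero in terms of $\sum_{j}\frac{1}{(z_1 - z_j)}$ and $\sum_j \frac{1}{(z_1-z_j)^2}$ over all the other zeros $z_j(t)$, and one controls these sums by their $t=0$ values plus an error that is governed by $G$ as defined in \eqref{eq:def-G}. The key point is that, because the configuration is periodic with exactly $2g$ zeros per period, the sum $\sum_{j\ge 2}\frac{2}{(\gamma_1-\gamma_j)^2}$ is a genuinely convergent (indeed, for fixed $D$, finite-plus-periodic) quantity, and the combinatorial bookkeeping from \cite[Lemma, page 7]{stopple} goes through essentially verbatim once one checks that the periodic tail $\sum_{\ell\ne 0}\frac{1}{(\gamma_1 - \gamma_j - 2\pi\ell)^2}$ is handled by the same $\sin^{-2}$ estimates De Bruijn uses. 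The upshot is the same Riccati-type inequality as in \cite{csv}, whose integration yields that $\Xi_t$ develops a non-real zero as soon as $t$ drops below $\frac{(1-5\gamma_1^2 G)^{4/5}-1}{8G}$, which is exactly \eqref{eq:stopple-bound-on-Lambda}; the hypothesis $5\gamma_1^2 G < 1$ is precisely what makes the exponent well-defined and the bound negative-but-finite.

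I would therefore structure the write-up as: (1) invoke the backwards heat equation and set up the zero trajectories by the implicit function theorem, noting finiteness per period; (2) derive the first- and second-moment sum identities for the zero velocities; (3) bound the relevant sums by $G$ and the $t=0$ data, checking that periodicity only improves (or at worst leaves unchanged) the estimates in \cite{csv,stopple}; (4) integrate the resulting differential inequality to conclude. The main obstacle I anticipate is step (3): one must verify that De Bruijn's and Stopple's inequalities, which were written for zeros of an entire function on $\bbR$ (the number-field case) or a single $L$-function, remain valid for the $2\pi$-periodic zero configuration — in particular that no new contribution from the infinitely many periodic copies of each zero spoils the constant $5$ or the exponent $4/5$. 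I expect this to be a routine but careful check, essentially because the cotangent-sum identities already encode exactly this periodic summation; still, it is the place where the function-field analogue genuinely differs from the literature and so deserves the most attention.
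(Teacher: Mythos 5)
Your proposal follows essentially the same path as the paper: both appeal to the Csordas--Smith--Varga/Stopple argument of deriving from the backwards heat equation an ODE for the motion of the smallest zero $\gamma_1(t)$, bounding the relevant sums by $G$, and integrating to find the time of collision with $-\gamma_1$, with the main point of care being that the periodic (finitely many zeros per period) configuration does not spoil the constants. The paper's own proof is just a citation to Stopple's Theorem 1 with a one-line description of this method, so your more cautious but equivalent outline — including the correct identification of the one nontrivial check, namely that the $\sum_\ell (\gamma_1 - \gamma_j - 2\pi\ell)^{-2}$ periodic tails are already encoded in the $\csc^2$ identities — is exactly the content the authors had in mind.
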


\begin{proof}
This is a direct generalization of \cite[Theorem 1]{stopple}. In \cite{stopple}, the quantity $G$ has the same form except the sum is over the zeros of a number field $L$-function.\footnote{The quantity defined in \eqref{eq:def-G} is actually called ``$g(0)$'' in \cite{stopple}. However, in this paper, we use $g$ for the genus of the hyperelliptic curve defined by $D$.} The condition ($5\gamma_1^2 G < 1$) is the same as in \cite{stopple}, and the conclusion (a lower bound on $\Lambda_D$) is the same. The method of proof uses differential equations governing the motion of the zero $\gamma_1$ as $t$ changes to find a time $t < 0$ when $\gamma_1$ coalesces with $-\gamma_1$.
\end{proof}

\subsection{Low-lying zeros and connections with random matrix theory}\label{subsec:lowlyingzerosandconnections}

We now show that the condition $5 \gamma_1^2 G$ in \Cref{lem:stopple-bound-on-Lambda} does occur in certain families, using connections to random matrix theory. We begin by analyzing \eqref{eq:def-G}.

We assume the zeros of $\Xi_0(x, \chi_D)$ are simple, so by the discussion in \Cref{sec:notation-for-zeros}, we can write the first $g$ positive zeros as $0 < \gamma_1 < \cdots < \gamma_g < \pi$. Then all of the zeros of $\Xi_0$ are given by \Cref{eq:all-zeros-gamma}. Using this fact, we can write \eqref{eq:def-G} as
\begin{equation}
G
=
\sum_{\epsilon\in\{\pm 1\}}
\sum_{j=1}^g
\sum_{\ell \in \bbZ}{}' \
\frac{2}{[\gamma_1 - (\epsilon \gamma_j + 2\pi \ell)]^2},
\end{equation}
where the prime mark ($'$) means we omit the two terms $(\epsilon, j, \ell) = (\pm 1, 1, 0)$. Using the identity $\sum_{n \in \bbZ} (n + \alpha)^{-2} = \pi\csc^2 \pi \alpha$, after some algebraic manipulations, we obtain
\begin{equation}\label{eq:G-finite-sum}
G\  = \
\frac{1}{6}
-
\frac{1}{2\gamma_1^2}
+
\frac{1}{2} \csc^2 \gamma_1
+
\frac{1}{2}
\sum_{\epsilon \in \{\pm 1\}} \sum_{j=2}^g
\csc^2
\left(
\frac{\gamma_1 + \epsilon \gamma_j}{2}
\right).
\end{equation}
Observe that the sum on the right is now a finite sum.

With some work, we can determine sufficient conditions for $5 \gamma_1^2 G$, which allows us to apply \Cref{lem:stopple-bound-on-Lambda}.




\begin{lem}\label{lem:fn-field-crude-conditions}
Let $(q, D)$ be a good pair and let $\gamma_1, \ldots, \gamma_g$ be the zeros of $\Xi_0(x, \chi_D)$ in $[0, \pi]$. Assume the zeros are simple so that $0 < \gamma_1 < \cdots < \gamma_g < \pi$. Suppose the following conditions hold
\begin{itemize}
\item $g \geq 13$,
\item $\left(\frac{g}{\pi} \gamma_1\right)^2 \leq \frac{1}{500 g}$,
\item $\frac{1}{2} \leq \frac{g}{\pi} \gamma_2 \leq 2$.
\end{itemize}
Then $5\gamma_1^2G < 1$ (where $G$ is defined in \eqref{eq:G-finite-sum}).
\end{lem}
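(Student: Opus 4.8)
The plan is to bound $G$ from above by an explicit constant times $g^3$, starting from the closed form \eqref{eq:G-finite-sum}, and then to combine this with the hypothesis $(\tfrac g\pi\gamma_1)^2\le\tfrac1{500g}$ — which rearranges to $\gamma_1^2\le\tfrac{\pi^2}{500g^3}$ — so that the product $5\gamma_1^2G$ is forced below $1$. Throughout, the point is that $\gamma_1$ is so small that the only danger comes from the size of $G$, and $G$ is only of order $g^3$.

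First I would dispose of the ``diagonal'' part $\tfrac16-\tfrac1{2\gamma_1^2}+\tfrac12\csc^2\gamma_1$ of \eqref{eq:G-finite-sum}. The stated bounds force $\gamma_1\le\tfrac{\pi}{\sqrt{500}\,g^{3/2}}$, which for $g\ge13$ is well below $10^{-2}$; an elementary estimate for $\csc^2$ near the origin (for instance $\sin^2 z\ge z^2(1-z^2/3)$, which gives $\tfrac12\csc^2 z-\tfrac1{2z^2}\le\tfrac13+O(z^2)$) then bounds this diagonal part by $\tfrac12$. It is worth emphasizing that the cruder Jordan bound $\csc^2\gamma_1\le\tfrac{\pi^2}{4\gamma_1^2}$ is \emph{not} usable here: after multiplication by $5\gamma_1^2$ it would leave an $O(1)$ error, so the cancellation against $-\tfrac1{2\gamma_1^2}$ must be kept.

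Next I would bound each term $\csc^2\bigl(\tfrac{\gamma_1+\epsilon\gamma_j}{2}\bigr)$ of the finite sum, for $2\le j\le g$ and $\epsilon=\pm1$. Combining $\gamma_2\ge\tfrac{\pi}{2g}$ with $\gamma_1\le\tfrac{\pi}{\sqrt{500}\,g^{3/2}}$ gives $\gamma_1/\gamma_2\le\tfrac2{\sqrt{500g}}<\tfrac1{40}$ for $g\ge13$, hence $\gamma_j-\gamma_1\ge\tfrac{39}{40}\gamma_j\ge\tfrac{39\pi}{80g}$ for every $j\ge2$; also $\gamma_1+\gamma_j\ge\gamma_j\ge\tfrac{\pi}{2g}$ while $\gamma_1+\gamma_j<\pi+\gamma_1$. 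Since $\csc^2$ is even, each of these arguments has absolute value at least $\tfrac{39\pi}{160g}$ and at most $\tfrac\pi2$ plus a negligible amount, and on that range $\csc^2$ is dominated by its value at $\tfrac{39\pi}{160g}$ (near $\pi/2$ its value is close to $1$); Jordan's inequality bounds that value by $(\tfrac{80g}{39})^2<4.21\,g^2$. Summing the $2(g-1)$ such terms and halving shows the finite sum in \eqref{eq:G-finite-sum} is at most $4.21\,g^3$.

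Putting the pieces together, $G<\tfrac12+4.21\,g^3<4.3\,g^3$ once $g\ge13$, whence $5\gamma_1^2G<5\cdot\tfrac{\pi^2}{500g^3}\cdot4.3\,g^3=\tfrac{4.3\,\pi^2}{100}<0.43<1$, which is the assertion. The only genuine work lies in the constant bookkeeping of the two middle paragraphs: one must verify that the argument of $\csc^2$ never gets dangerously close to $0$ (it cannot, because $\gamma_1$ is negligible next to $\gamma_2$) and that the overshoot past $\pi/2$ is harmless, and then check that the accumulated constant $C$ in $G\le Cg^3$ stays comfortably below $100/\pi^2\approx10.1$, which it does with a wide margin. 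The hypotheses $g\ge13$ and $\tfrac g\pi\gamma_2\le2$ are far more than this particular estimate needs; essentially all that is used is the \emph{lower} bound $\tfrac g\pi\gamma_2\ge\tfrac12$ together with the smallness of $\gamma_1$.
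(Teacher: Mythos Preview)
Your proof is correct and follows essentially the same line as the paper's: split $G$ into the diagonal piece $\tfrac{1}{6}-\tfrac{1}{2\gamma_1^2}+\tfrac{1}{2}\csc^2\gamma_1$ (bounded by an absolute constant via the cancellation you isolate) and the finite sum, bound the latter by $(g-1)$ times its largest term --- which is $\csc^2\bigl(\tfrac{\gamma_2-\gamma_1}{2}\bigr)$ of order $g^2$ --- and then combine $G=O(g^3)$ with $\gamma_1^2\le\pi^2/(500g^3)$.

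The one noteworthy difference is your choice of $\csc^2$ inequality. The paper uses $\csc^2 x\le 1.1/x^2$, valid only for $|x|\le\tfrac12$, and invokes the hypothesis $\tfrac{g}{\pi}\gamma_2\le 2$ solely to place $\tfrac{\gamma_2-\gamma_1}{2}\le\tfrac{\pi}{g}\le\tfrac{\pi}{13}<\tfrac12$ inside that range. By using Jordan's inequality on all of $(0,\tfrac{\pi}{2}]$ instead, you avoid that constraint entirely, and your closing remark that the upper bound $\tilde\gamma_2\le 2$ is not genuinely needed is correct --- a small but real streamlining over the paper's version.
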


Before we present the proof, we make a few observations. The quantities $\tilde\gamma_j := \frac{g}{\pi} \gamma_j$ are rescalings of the zeros. Since $0 < \tilde \gamma_1 < \cdots < \tilde \gamma_g < g$, the normalized zeros $\tilde \gamma_1, \tilde \gamma_2, \ldots$ on average have unit spacing on the positive real line. Thus the condition $\tilde \gamma_1^2 \leq \frac{1}{500g}$ says that the first zero is unusually small, while the condition $\frac{1}{2} \leq \tilde\gamma_2 \leq 2$ says that second zero is around where it is ``expected'' to be.

These conditions (along with $g \geq 13$) are very crude and the constants can easily be improved with some work. However, our focus is not on the optimum, but the fact that such a statement as the lemma exists.

\begin{proof}
This argument is technical, and relies on bounds for the function $\csc^2 x$. In particular, for $|x| \leq \frac{1}{2}$,
\begin{align}
\label{eq:csc-bound-weak}
\csc^2 x
&\leq
\frac{1.1}{x^2}
,
\\
\label{eq:csc-bound-strong}
\csc^2 x
&\leq
\frac{1}{x^2}
+
0.36
.
\end{align}

Next, we take the expression \eqref{eq:G-finite-sum} for $G$ and break it into two parts by writing
\begin{equation}\label{eq:G-break-up}
G
\ = \
\frac{1}{6}
+
S_{I}
+
S_{II}
,
\end{equation}
where
\begin{align}
\begin{split}
S_I
&\ = \
\frac{1}{2} \csc^2 \gamma_1
-
\frac{1}{2\gamma_1^2}
\\
S_{II}
&\ = \
\frac{1}{2}
\sum_{\epsilon \in \{\pm 1\}} \sum_{j=2}^g
\csc^2
\left(
\frac{\gamma_1 + \epsilon \gamma_j}{2}
\right)
.
\end{split}
\end{align}
Using the bound \eqref{eq:csc-bound-strong} and our assumption on $\gamma_1$, we have
\begin{equation}\label{eq:S-I-bound}
S_I \ \leq \ 0.18.
\end{equation}

Next we bound $S_{II}$. The idea will be to bound the sum by the maximum term, i.e.,
\begin{equation}
S_{II}
\ = \
\frac{1}{2} \cdot 2 \cdot (g-1)
\cdot
\max_{
 \substack{
 \epsilon \in \{\pm 1\}
 \\
 2 \leq j \leq g
 }
}
\left\{
\csc^2
\left(
\frac{\gamma_1 + \epsilon \gamma_j}{2}
\right)
\right\}.
\end{equation}
Notice that $\csc^2 x$ is large when $x$ is near a multiple of $\pi$. The choice of $(\epsilon, j) \in \{ \pm 1\} \times \{2, \ldots, g\}$ that minimizes the distance between $\frac{\gamma_1 + \epsilon\gamma_j}{2}$ and $0$ is $(\epsilon, j) = (-1, 2)$. That distance is
\begin{equation}\label{eq:gamma1-gamma2-difference}
\left|
0
-
\frac{\gamma_1 - \gamma_2}{2}
\right|
\ = \
\frac{\gamma_2 - \gamma_1}{2}
\ \leq\
\frac{\gamma_2}{2}
\ \leq\
\frac{\pi}{g}
\ \leq\
\frac{\pi}{13}.
\end{equation}

The choice of $(\epsilon, j)$ that minimizes the distance between $\frac{\gamma_1 + \epsilon\gamma_j}{2}$ and $\pi$ is $(\epsilon, j) = (1, g)$. That distance is
\begin{equation}
\left|
\pi
-
\frac{\gamma_1 + \gamma_g}{2}
\right|
\ = \
\pi
-
\frac{\gamma_1 + \gamma_g}{2}
\ \geq\
\pi
-
\frac{1 + \pi}{2}
\ \geq\
\frac{\pi}{2} - \frac{1}{2}.
\end{equation} Note that it suffices to obtain a lower bound on the absolute value of the difference, as if it were large than it would be closer to a different multiple of $\pi$.

The choice of $(\epsilon, j)$ that minimizes the distance between $\frac{\gamma_1 + \epsilon\gamma_j}{2}$ and $-\pi$ is $(\epsilon, j) = (-1, g)$. That distance is
\begin{equation}
\left|
-\pi
-
\frac{\gamma_1 - \gamma_g}{2}
\right|
\ = \
\pi
-
\frac{-\gamma_1 + \gamma_g}{2}
\ \geq\
\pi
-
\frac{0 + \pi}{2}
\ \geq\
\frac{\pi}{2}.
\end{equation}
It follows that $\csc^2
\left(
\frac{\gamma_1 + \epsilon \gamma_j}{2}
\right)$ is maximized at $(\epsilon, j) = (-1, 2)$, so
\begin{equation}\label{eq:S-II-initial-bound}
S_{II}
\ \leq\
\frac{1}{2}
\sum_{\epsilon \in \{\pm 1\}} \sum_{j=2}^g
\csc^2
\left(
\frac{\gamma_1 - \gamma_2}{2}
\right)
\ \leq\
g
\csc^2
\left(
\frac{\gamma_1 - \gamma_2}{2}
\right)
.
\end{equation}

As shown in \eqref{eq:gamma1-gamma2-difference}, we have $\left|\frac{\gamma_1 - \gamma_2}{2}\right| \leq \frac{1}{2}$. Thus, combining \eqref{eq:csc-bound-strong} and \eqref{eq:S-II-initial-bound} yields
\begin{equation}\label{eq:S-II-bound}
S_{II}
\ \leq\
1.1g\left(\frac{2}{\gamma_1 - \gamma_2}\right)^2
\ = \
\frac{4.4g}{\gamma_2^2} \left(\frac{1}{1 - \gamma_1/\gamma_2}\right)^2
\ \leq\
\frac{18g}{\gamma_2^2}.
\end{equation}
By combining \eqref{eq:G-break-up}, \eqref{eq:S-I-bound}, and \eqref{eq:S-II-bound}, we arrive at
%
\begin{equation}\label{eq:5gammaG-bound}
5\gamma_1^2 G
\ \leq\
1.8 \gamma_1^2
+
90g \cdot \frac{\gamma_1^2}{\gamma_2^2}.
\end{equation}

By using $\tilde \gamma_2 \geq \frac{1}{2}$, we have
\begin{equation}
90g \cdot \frac{\gamma_1^2}{\gamma_2^2}
\ = \
90g \cdot \frac{\tilde \gamma_1^2}{\tilde \gamma_2^2}
\ \leq\
450 g \tilde\gamma_1^2
\ \leq\
0.9,
\end{equation}
where we use $\tilde\gamma_1^2 \leq \frac{1}{500g}$ at the end. Then $1.8 \tilde \gamma_1^2 \leq 1.8 \cdot\frac{1}{500 \cdot 13} \leq 0.0003$, so $5\gamma_1^2G < 1$. \end{proof}

As remarked by Stopple, the expression on the right hand side of \eqref{eq:stopple-bound-on-Lambda} has the power series expansion
\begin{equation}
\frac{(1 - 5 \gamma_1^2 G)^{4/5}-1}{8G}
\ = \
-\frac{1}{2} \gamma_1^2
\left(
1
+
\frac{\gamma_1^2 G}{2}
+
O(\gamma_1^4G^2)
\right).
\end{equation}
Thus the smaller the first zero $\gamma_1$ is, the better the lower bound on $\Lambda$ given by \Cref{lem:fn-field-crude-conditions} is.

We discuss an interpretation of the above. Since our Newman conjectures vary over families, we write $\tilde \gamma_j(D)$ and $g(D)$ to remind ourselves of dependence on $D$.

\begin{cor}
Let $\mathcal F$ be a family of polynomials $D$ belonging to good pairs $(q, D)$. Suppose there exists a sequence $D_1, D_2, \ldots$ in $\mathcal F$ such that $\gamma_1(D_n) \to 0$ as $n \to \infty$ and for all $n$,

\begin{itemize}
\item
$g(D_n) \geq 13$
\item
$\tilde \gamma_1(D_n)^2 \leq \frac{1}{500 g(D_n)}$
\item
$\frac{1}{2} \leq \tilde \gamma_2(D_n) \leq 2$.
\end{itemize}

Then $\Lambda_{D_n} \to 0$ as $n \to \infty$, so Newman's conjecture is true for the family $\mathcal F$. \end{cor}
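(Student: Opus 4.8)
The plan is to verify that the three numbered hypotheses on the sequence $D_1, D_2, \ldots$ are exactly the hypotheses of \Cref{lem:fn-field-crude-conditions}, so that lemma hands us $5\gamma_1(D_n)^2 G(D_n) < 1$ for every $n$; then \Cref{lem:stopple-bound-on-Lambda} applies to each $D_n$ and yields the explicit lower bound
\begin{equation}
\Lambda_{D_n} \ > \ \frac{(1 - 5\gamma_1(D_n)^2 G(D_n))^{4/5} - 1}{8 G(D_n)}.
\end{equation}
Combined with the already-established upper bound $\Lambda_{D_n} \le 0$ (from RH for curves, \Cref{rem:RH-curves}, together with \Cref{lem:real-zeros-stay-real-fnfield}), it then suffices to show the right-hand side above tends to $0$ as $n \to \infty$. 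Once $\Lambda_{D_n} \to 0$, the definition of Newman's conjecture for the family $\mathcal F$ — namely $\sup_{D \in \mathcal F} \Lambda_D \ge 0$ (equivalently $= 0$) — follows immediately, since each $\Lambda_{D_n} \le 0$ and the $\Lambda_{D_n}$ approach $0$ from below.

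The one genuine estimate is the limit of the lower bound. First I would control $G(D_n)$: from the hypotheses, reusing the computation in the proof of \Cref{lem:fn-field-crude-conditions}, we get $G(D_n) = \tfrac16 + S_I + S_{II}$ with $S_I \le 0.18$ and $S_{II} \le 18 g(D_n)/\gamma_2(D_n)^2 = 18 \pi^2/(g(D_n)\,\tilde\gamma_2(D_n)^2) \cdot \tfrac{1}{\pi^2}\cdot\ldots$ — more cleanly, $S_{II} \le 18 g(D_n)/\gamma_2(D_n)^2 \le 72\pi^2/g(D_n) \le 72\pi^2/13$ using $\tilde\gamma_2(D_n) \ge \tfrac12$, i.e.\ $\gamma_2(D_n) \ge \pi/(2g(D_n))$. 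Hence $G(D_n)$ is bounded above by an absolute constant $C$, and it is bounded below by, say, $\tfrac16 + S_I \ge \tfrac16 - \tfrac{1}{2\gamma_1(D_n)^2} + \tfrac12\csc^2\gamma_1(D_n) > 0$ (the $S_I$ term is nonnegative since $\csc^2 x \ge x^{-2}$). So $G(D_n) \in [c, C]$ for absolute constants $0 < c \le C$. Then $5\gamma_1(D_n)^2 G(D_n) \le 5 C \gamma_1(D_n)^2 \to 0$ by the hypothesis $\gamma_1(D_n) \to 0$. Using the elementary inequality $(1-u)^{4/5} - 1 \ge -u$ for $0 \le u \le 1$ (so the numerator is $\ge -5\gamma_1(D_n)^2 G(D_n)$), the lower bound satisfies
\begin{equation}
0 \ \ge \ \Lambda_{D_n} \ > \ \frac{(1 - 5\gamma_1(D_n)^2 G(D_n))^{4/5} - 1}{8 G(D_n)} \ \ge \ -\frac{5\gamma_1(D_n)^2 G(D_n)}{8 G(D_n)} \ = \ -\frac{5}{8}\gamma_1(D_n)^2,
\end{equation}
which $\to 0$. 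By the squeeze, $\Lambda_{D_n} \to 0$.

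The main obstacle, such as it is, is bookkeeping rather than mathematics: one must make sure the constant bounds on $G(D_n)$ are genuinely uniform in $n$ — i.e.\ that the lower bound $G(D_n) \ge c > 0$ does not degenerate — and that the power-series/convexity estimate $(1-u)^{4/5}-1 \ge -u$ is deployed in the right direction. Both are routine; the crucial inputs (the existence of the sequence with $\gamma_1(D_n) \to 0$ satisfying the three side conditions) are assumed in the statement, and the two lemmas \Cref{lem:fn-field-crude-conditions} and \Cref{lem:stopple-bound-on-Lambda} do all the real work. Indeed, one could even skip the careful two-sided control of $G$ and argue more crudely: \Cref{lem:fn-field-crude-conditions} gives $5\gamma_1(D_n)^2 G(D_n) < 1$ outright, and the remark following \Cref{lem:stopple-bound-on-Lambda} records that the lower bound equals $-\tfrac12\gamma_1(D_n)^2(1 + O(\gamma_1(D_n)^2 G(D_n)))$, so it is $O(\gamma_1(D_n)^2) \to 0$; combined with $\Lambda_{D_n} \le 0$ this already finishes the proof.
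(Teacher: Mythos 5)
Your overall plan is exactly what the paper has in mind (the paper itself gives no explicit proof for this corollary; it is meant to follow directly from \Cref{lem:fn-field-crude-conditions}, \Cref{lem:stopple-bound-on-Lambda}, and the power-series remark): verify the hypotheses of \Cref{lem:fn-field-crude-conditions}, invoke \Cref{lem:stopple-bound-on-Lambda}, pair the resulting lower bound with the upper bound $\Lambda_{D_n}\le 0$ that RH for curves supplies, and show the lower bound tends to $0$. The chain $\Lambda_{D_n} > \frac{(1-u)^{4/5}-1}{8G} \ge \frac{-u}{8G} = -\frac{5}{8}\gamma_1(D_n)^2$ (with $u = 5\gamma_1^2G$, valid since \Cref{lem:fn-field-crude-conditions} gives $0\le u<1$ and $(1-u)^{4/5}$ is concave on $[0,1]$) is clean, correct, and arguably tidier than expanding the power series in the paper's remark, since the $G$'s cancel outright.

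However, your intermediate claim that $G(D_n)$ is bounded by an absolute constant is wrong, due to an arithmetic slip. Since $\gamma_2 = \frac{\pi}{g}\tilde\gamma_2$, one has $\frac{18g}{\gamma_2^2} = \frac{18g^3}{\pi^2\tilde\gamma_2^2} \le \frac{72g^3}{\pi^2}$, \emph{not} $\frac{72\pi^2}{g}$; so $S_{II}$ (and hence $G$) can grow like $g^3$, and your assertion that $5\gamma_1^2 G \le 5C\gamma_1^2 \to 0$ does not follow. Fortunately this step is extraneous: your final squeeze uses only $u<1$ (from \Cref{lem:fn-field-crude-conditions}) and the cancellation of $G$, not $u\to 0$ or a uniform bound on $G$, so the conclusion $\Lambda_{D_n}\to 0$ survives. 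You should simply delete the digression about $G$ being bounded, or fix the exponent; as written it is a false statement embedded in an otherwise correct proof.
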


The conditions above essentially say that there is a set of curves in our family where the first zero is unusually small and the second zero is on the order of its expected value. For many families with $g$ and $q$ tending to infinity, this is known due to work of Katz and Sarnak \cite{ks-article, ks-book}.


\appendix

\section{Example of $\Xi_t(x, \chi_D)$ in function fields and the role of the backwards heat equation.}\label{sec:backwards-heat-eq-example}

For
\begin{equation}
D \ = \  T^5 + T^4 + T^3 + 2T + 2 \in \bbF_5[T]
,
\end{equation}
we have
\begin{equation}\label{eq:sample-xi-t}
\Xi_t(x, \chi_D)
\ = \
10 e^{4t} \cos 2x -2 \sqrt{5} e^t \cos x  -1
.
\end{equation}
For any $t$, we observe that $e^{2ix} \cdot \Xi_t(x, \chi_D)$ is a fourth degree polynomial in $e^{ix}$. Thus $\Xi_t(x, \chi_D)$ must have exactly four zeros with $\Re(x) \in [0, 2\pi)$.

\Cref{fig:example-plots} shows plots of $\Xi_t(x, \chi_D)$ for various times $t$. Observe that as we move backwards in time, the peaks get smaller. Because $\Xi_t(x, \chi_D)$ solves the backwards heat equation, the ``flattening'' of the function behaves like the diffusion of heat.

\begin{figure}[h!]
        \centering
        \begin{subfigure}[b]{0.40\textwidth}
                \centering
                \includegraphics[width=\textwidth]{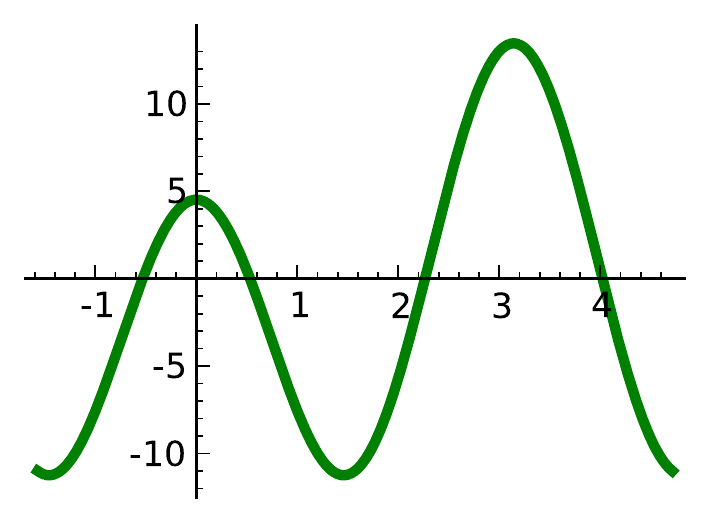}
                \caption{$t = 0$}
                \label{}
        \end{subfigure}%
\qquad\qquad
        ~ 
        \begin{subfigure}[b]{0.40\textwidth}
                \centering
                \includegraphics[width=\textwidth]{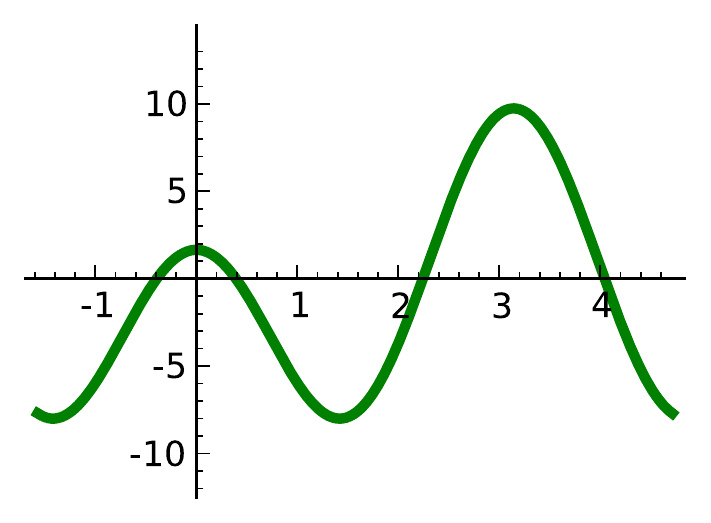}
                \caption{$t = -0.1$}
                \label{}
        \end{subfigure}%
\\
        ~ 
        \begin{subfigure}[b]{0.40\textwidth}
                \centering
                \includegraphics[width=\textwidth]{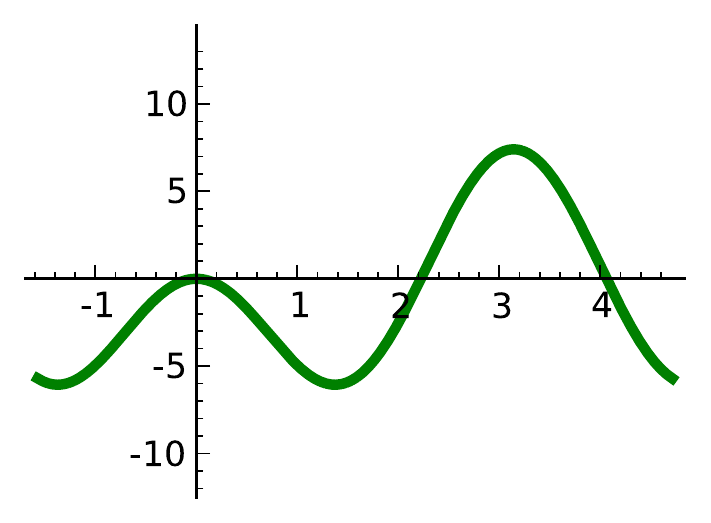}
                \caption{$t \approx -0.189$}
                \label{}
        \end{subfigure}
\qquad\qquad
        ~ 
        \begin{subfigure}[b]{0.40\textwidth}
                \centering
                \includegraphics[width=\textwidth]{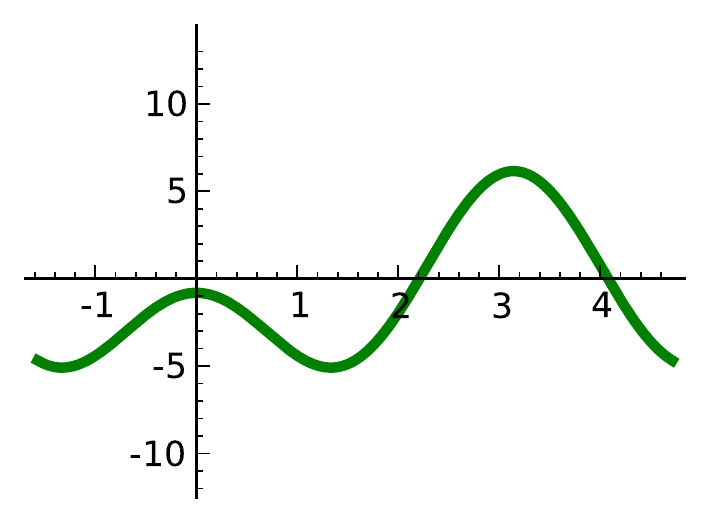}
                \caption{$t = -0.25$}
                \label{}
        \end{subfigure}
        \caption{Plots of $\Xi_t(x, \chi_D)$ for different $t
$.}
        \label{fig:example-plots}
\end{figure}

As we decrease $t$, the two zeros on the left move towards each other, until they coalesce at $t \approx -0.189$. If we keep going further back in time, these two zeros ``pop off'' the real line. For instance, at $t = -0.25$, the function has zeros at $x \approx \pm 0.152i$.

The time when the zeros coalesce ($t \approx -0.189$) is the De Bruijn--Newman constant $\Lambda_D$ for this $D$. It is the largest real solution to $\Xi_t(0, \chi_D) = 0$. From \eqref{eq:sample-xi-t}, we see that $\Lambda_D$ is the logarithm of the root of a fourth degree polynomial.

\section{Numerical calculations}\label{sec:numerical}

If $\Xi_t(\cdotwild, \chi_D)$ has a zero at $x = 0$, then it has a double zero there by evenness. Thus, by using \Cref{lem:double-zero-lower-bound-fnfield}, we see that a solution $t$ to $\Xi_t(0, \chi_D) = 0$ is a lower bound for $\Lambda_D$. We have
\begin{equation}
\Xi_t(0, \chi_D)
\ = \
\Phi_0 + 2\sum_{n=1}^g \Phi_n (e^{t})^{n^2}
,
\end{equation}
which is a polynomial in $e^t$ of degree $g^2$. As $g$ increases, it becomes harder to find the exact roots of this polynomial, but we may still proceed numerically. This gives us a method to quickly find lower bounds of $\Lambda_D$ for $D$.
%
%

For $q = 3$, this method produces the lower bounds given in \Cref{fig:table-lower-bounds-q-3} and \Cref{fig:table-lower-bounds-q-3-polys}.

\begin{figure}[h!]
        \centering
\[
    \begin{array}{l|l|c}
    g & (c_0, \ldots, c_g) & \text{lower bound on } \Lambda_D
\\ \hline
    1 & (1,-3)                  & -1.44 \cdot 10^{-1}
\\ \hline
    2 & (1,-3,5)                 & -5.28 \cdot 10^{-2}
\\ \hline
    3 & (1,-1,1,-7)                 & -1.26 \cdot 10^{-2}
\\ \hline
    4 & (1,-3,9,-23,39)                  & -1.05 \cdot 10^{-3}
\\ \hline
    5 & (1,-3,5,-3,-11,27)                  & -1.23        \cdot 10^{-4}
\\ \hline
    6 & (1,-1,3,-7,5,-13,11)                  & -3.02        \cdot 10^{-5}
\\ \hline
    7 & (1,1,5,3,1,-15,-51,-101)                  & -1.28        \cdot 10^{-5}
    \end{array}
\]

        \caption{Lower bounds on $\Lambda_D$ for certain $D \in \bbF_3[T]$. The values $c_0, \ldots, c_g$ are the coefficients of the $L$-function as in \eqref{eq:l-fn-sum-over-q-s} and \eqref{eq:l-fn-coefficients}.}
        \label{fig:table-lower-bounds-q-3}
\end{figure}

\begin{figure}[h!]
        \centering
\[
    \begin{array}{l|l}
    g & D
\\ \hline
    1 & T^3+2T+1
\\ \hline
    2 & T^5+T^3+T+1
\\ \hline
    3 & T^7+2T^5+T^3+2T^2+2T+2
\\ \hline
    4 & T^9+T^6+T^4+T^3+T^2+T+1
\\ \hline
    5 & T^{11}+2T^9+T^8+2T^7+2T^6+2T^5+2T^4+T^2+2T+1
\\ \hline
    6 & T^{13}+2T^{11}+T^{10}+2T^7+2T^6+2T^4+T^3+2T+1
\\ \hline
    7 & T^{15}+2T^{14}+2T^9+T^8+2T^6+T^3+2T^2+T+2
    \end{array}
\]

        \caption{Polynomials in $\bbF_3[T]$ used in \Cref{fig:table-lower-bounds-q-3}.}
        \label{fig:table-lower-bounds-q-3-polys}
\end{figure}

The above supports the claim that
\[
\lim_{g \to \infty}
\sup_{
 \substack{D \in \bbF_3[T]\\ \deg D \leq 2g + 1}
}
\Lambda_D
=0,
\]
which supports Newman's conjecture for fixed $q$.


\

\end{document}